  \theoremstyle{plain}
  \newtheorem{theorem}{Theorem}[section]
  \newtheorem{lemma}{Lemma}[section]
   \theoremstyle{remark}
  \newtheorem{remark}{Remark}[section]
  \numberwithin{equation}{section}
  \numberwithin{figure}{section}
\renewcommand{\baselinestretch}{1.00}
\begin{document}

\title{On the second boundary value problem for Monge-Amp\`ere type equations and geometric optics}

\author{Feida Jiang}
\address{College of Mathematics and Statistics, Nanjing University of Information Science and Technology, Nanjing 210044, P. R. China}
\email{jfd2001@163.com}

\author{Neil S. Trudinger}
\address{Centre for Mathematics and Its Applications, The Australian National University,
              Canberra ACT 0200, Australia}
\address{School of Mathematics and Applied Statistics, University of Wollongong, Wollongong, NSW 2522, Australia}
\email{Neil.Trudinger@anu.edu.au; neilt@uow.edu.au}

  %  General info
\subjclass[2000]{35J96, 90B06, 78A05.  }

%\date{\today}

\keywords{Monge-Amp\`ere equations, generated prescribed Jacobian equations, geometric optics, existence}

\maketitle

\abstract {In this paper, we prove the existence of classical solutions to second boundary value problems for generated prescribed Jacobian equations, as recently developed by the second author, thereby obtaining  extensions of classical solvability of optimal transportation problems to problems arising in near field geometric optics.  Our results depend in particular on  {\it a priori} second derivative estimates recently established by the authors under weak co-dimension one convexity hypotheses on the associated matrix functions with respect to the gradient variables, (A3w). We also avoid domain deformations by using the convexity theory of generating functions to construct unique initial solutions for our
homotopy family, thereby enabling application of the degree theory for nonlinear oblique boundary value problems.}
\endabstract

%\footnote { }

\baselineskip=12.8pt
\parskip=3pt
\renewcommand{\baselinestretch}{1.38}

\section{Introduction}\label{Section 1}

\vskip10pt
Let $\Omega$ be a domain in $n$ dimensional Euclidean space $\mathbb{R}^n$, and $Y$ be a mapping from $\Omega \times \mathbb{R}\times \mathbb{R}^n$ into $\mathbb{R}^n$. The prescribed Jacobian equation (PJE) has the following form,
\begin{equation}\label{PJE}
\det DY(\cdot, u, Du)=\psi(\cdot, u, Du),
\end{equation}
where $\psi$ is a given scalar function on $\Omega \times \mathbb{R}\times \mathbb{R}^n$ and $Du$ is the gradient vector of the function $u: \Omega\rightarrow \mathbb{R}$. We are concerned here with mappings $Y$ which can be generated by a smooth generating function $g$ defined on domains $\Gamma \subset \mathbb{R}^n\times \mathbb{R}^n \times \mathbb{R}$, which embrace applications in geometric optics and optimal transportation \cite{JT-Pogorelov, T2014}. In the general set up, we assume $g\in C^4(\Gamma)$, where $\Gamma$ has the property that the projections
$$I(x,y) = \{z\in\mathbb{R} |\  (x,y,z)\in\Gamma\}$$

\noindent are open intervals.
Denoting
\begin{equation}\label{U}
\mathcal{U}=\{(x,g(x,y,z),g_x(x,y,z))|\  (x,y,z)\in \Gamma\},
\end{equation}
then we have the following conditions,
\begin{itemize}
\item[{\bf A1}:]
For each $(x,u,p) \in \mathcal U$, there exists a unique point $(x,y,z)\in\Gamma$
satisfying
$$g(x,y,z) = u, \ \ g_x(x,y,z) = p.$$
\item[{\bf A2}:]
$g_z < 0$,  $\det E \ne 0$, in $ \Gamma$, where $E$ is the $n\times n$ matrix given by
$$E = [E_{i,j}] =  g_{x,y} - (g_z)^{-1}g_{x,z}\otimes g_y.$$
\end{itemize}
The sign of $g_z$ in A2 can be changed as we wish. Here we fix the sign of $g_z$ to be negative in accordance with \cite{JT-Pogorelov, T2014}. By defining $Y(x,u,p)=y$ and $Z(x,u,p)=z$ in A1, the mapping $Y$ together with the dual function $Z$ are generated by equations
\begin{equation}\label{generating equation}
g(x,Y,Z)=u, \quad g_x(x,Y,Z)=p.
\end{equation}
Since the Jacobian determinant of the mapping $(y,z)\rightarrow (g_x,g)(x,y,z)$ is $g_z\det E, \neq 0$ by A2, the functions $Y$ and $Z$ are $C^3$  smooth. By differentiating \eqref{generating equation} with respect to $p$, we have $Y_p=E^{-1}$. Also, by differentiating \eqref{generating equation} for $p=Du$, with respect to $x$, we obtain the generated prescribed Jacobian equation (GPJE), 
\begin{equation}\label{GPJE}
\mathcal F[u]:=\det[D^2u-g_{xx}(\cdot,Y(\cdot,u,Du),Z(\cdot,u,Du))]=\det E(\cdot,Y,Z)\psi(\cdot,u,Du),
\end{equation}
when the one-jet $J_1[u](\Omega):=\{(x,u,Du) |\ x\in \Omega \}\subset \mathcal U$, which can also be calculated from equation \eqref{PJE} directly. As usual, we shall denote 
\begin{equation}\label{A,B}
A(\cdot,u,p)=g_{xx}(\cdot,Y(\cdot,u,p),Z(\cdot,u,p)), \  B(\cdotp,u,p)=\det E(\cdot,Y(\cdot,u,p),Z(\cdot,u,p))\psi(\cdot,u,p). 
\end{equation}
Then a function $u\in C^2(\Omega)$ is elliptic (degenerate elliptic) for equation \eqref{GPJE}, whenever $D^2u-A(\cdot,u,Du)>0$, ($\ge 0$), which implies the right hand side $B(\cdot,u,Du)>0$, ($\ge 0$). We refer the reader to \cite{T2014} for more background material about generated prescribed Jacobian equations.

The second boundary value problem for equation \eqref{PJE} is to prescribe the image
\begin{equation}\label{Second boundary data}
Tu(\Omega):=Y(\cdot,u,Du)(\Omega)=\Omega^*,
\end{equation}
where $\Omega^*\subset\mathbb{R}^n$ is a target domain. For applications to  geometric optics, the function $\psi$ is separable in the sense that 
\begin{equation}\label{psi}
|\psi|(x,u,p)=\frac{f(x)}{f^*\circ Y(x,u,p)},
\end{equation}
for positive intensities $f\in L^1(\Omega)$ and $f^*\in L^1(\Omega^*)$. Then a necessary condition for the existence of an elliptic solution with the mapping $Tu$ being a diffeomorphism, to the second boundary value problem \eqref{GPJE}, \eqref{Second boundary data}, is the conservation of energy
\begin{equation}\label{conservation of energy}
\int_\Omega f =\int_{\Omega^*} f^*.
\end{equation}
We shall assume $f$ and $f^*$ are both smooth and have positive lower bounds and upper bounds. Note that in optimal transportation \cite{MTW2005, TW2009}, $f$ and $f^*$ are densities, and the condition \eqref{conservation of energy} is called the mass balance condition.

The strict monotonicity property of the generating function $g$ with respect to $z$, enables us to define a dual generating function $g^*$,
\begin{equation}\label{dual generating function g*}
g(x,y,g^*(x,y,u))=u,
\end{equation}
with $(x,y,u)\in \Gamma^* :=\{(x,y,g(x,y,z)) |  (x,y,z) \in \Gamma\}$, $g^*_x =-g_x/g_z$, $g^*_y=-g_y/g_z$ and $g^*_u=1/g_z$, which leads to a dual condition to A1, namely

\begin{itemize}
\item[{\bf A1*}:]
The mapping $Q: = -g_y/g_z$ is one-to-one in $x$, for all $(x,y,z) \in \Gamma$.
\end{itemize}
Note that the Jacobian matrix of the mapping $x\to Q(x,y,z)$ is $-E^t/g_z$ where $E^t$ is the transpose of $E$ so its determinant will not vanish when condition A2 holds,  that is A2 is self dual.

We assume also the following conditions on the generating function $g$ which are expressed in terms of  the matrix $A$. Extending the necessary assumption A3w for regularity in optimal transportation in \cite{Loeper09Acta, Tru2006, TW2009}, we assume the following regular condition for the matrix function $A$ with respect to $p$, which we formulate together with its strict version \cite{MTW2005}.
\begin{itemize}
\item[{\bf A3w}] ({\bf A3}):
The matrix function $A$ is regular (strictly regular) in $\mathcal U$, that is $A$ is co-dimension one convex (strictly co-dimension one convex) with respect to $p$
in the sense that,
$$A^{kl}_{ij}\xi_i\xi_j\eta_k\eta_l: = (D_{p_kp_l}A_{ij}) \xi_i\xi_j\eta_k\eta_l \ge 0, (>0) $$

\vspace {0.2cm}

\noindent in $\mathcal U$, for all  $\xi,\eta \in \mathbb{R}^n$ such that $\xi \!\cdot\! \eta = 0$.
\end{itemize}

We also need a monotonicity condition on the matrix $A$ with respect to $u$, namely A4w or A4*w.
\begin{itemize}
\item[{\bf A4w}] ({\bf A4*w}): The matrix $A$ is monotone increasing (decreasing) with respect to $u$ in $\mathcal U$, that is
$$D_uA_{ij}\xi_i\xi_j \ge 0, \ (\le 0)$$

\vspace {0.2cm}

\noindent  in $\mathcal U$, for all  $\xi \in \mathbb{R}^n$.

\end{itemize}

As in \cite{JT-Pogorelov, T2014}, we need to impose convexity assumptions on the set $\mathcal U$ when we apply
conditions A3w and A4w (or A4*w). For globally smooth solutions these conditions will be assured by the necessary convexity assumptions on our domains and we may restrict our consideration accordingly. Thus we need only assume that
the product $\bar\Omega\times\bar\Omega^*$ lies in the projection of $\Gamma$ on
$\mathbb{R}^n\times\mathbb{R}^n$ with $\mathcal U$ replaced by
$$\mathcal U(\Omega,\Omega^*) = \{ (x,g(x,y,z),g_x(x,y,z)) | \  x\in \bar \Omega, y\in \bar \Omega^*, z\in I(x,y) \}$$

\noindent in conditions A3w, A3, A4w and A4*w. Note that there is no loss of generality in maintaining A1, A2 and A1* as $\Gamma$ can be redefined so its projection on $\mathbb{R}^n\times\mathbb{R}^n$ is close to $\overline\Omega\times\overline\Omega^*$.

We next have the following  condition to guarantee the appropriate
controls on $J_1[u]$, which is a refinement of condition G5
in \cite{T2014}; (see also \cite{T2014-1}). Namely, writing $J(x,y)
=g(x,y,\cdot)I(x,y)$,
 we assume:
\begin{itemize}
\item[{\bf A5}:]
There exists  an infinite open interval $J_0$
and a positive constant $K_0$, such that $J_0\subset J(x,y)$ and
$$  |g_x(x,y,z)| < K_0, $$
for all $x\in\bar\Omega, y\in \bar\Omega^*, g(x,y,z) \in J_0$.
\end{itemize}

Note that we can assume that $J_0 = (m_0,\infty)$ for some constant $m_0 \ge -\infty$ or $J_0 = (-\infty, M_0)$, for a constant $M_0$. The situation when $J_0$ is finite will be considered at the end of our existence proof.

Finally to complete our hypotheses we adopt the following domain convexity definitions from \cite {Tru2008,LT2016}, which extend the corresponding conditions for optimal transportation in \cite{TW2009}. It will be convenient to express these more generally in terms of the mapping $Y$ generated by $g$.

The $C^2$ domain $\Omega$ is $Y$-convex (uniformly $Y$-convex) with respect to $\Omega^*\times J$, where $ J$ is an open interval in $J(\Omega,\Omega^*)$, if it is connected and
\begin{equation}\label{Y-convex}
        [D_i\gamma_j(x)- D_{p_k}A_{ij}(x,u,p)\gamma_k(x)]\tau_i\tau_j\ge 0,(\delta_0),
    \end{equation}
for all $x\in\partial\Omega$, $u\in {J}$, $Y(x,u,p) \in \Omega^*$, unit outer normal $\gamma$ and unit tangent vector $\tau$, (for some constant $\delta_0>0$).

The domain $\Omega^*$ is $Y^*$-convex (uniformly $Y^*$-convex) with respect to $\Omega\times  J$
if  the images
$$\mathcal P(x,u,\Omega^*) = \{p\in\mathbb{R}^n | \ (x,u,p)\in \mathcal U, Y(x,u,p)\in\Omega^*\}$$
are convex for all $(x,u)\in \Omega\times J$, (uniformly convex for all $x\in\overline\Omega$, $u\in\overline J$).

The lack of symmetry between our formulations is caused by using the $u$ variable in both cases. We will discuss them further, including their relationship with the $g$-convexity notion introduced in \cite {T2014}, in conjunction with the application of our estimates from \cite{JT-Pogorelov} in Section 3.

We can now state our main theorem for the second boundary value problem \eqref{GPJE}, \eqref{Second boundary data}.

\begin{theorem}\label{Th1.1}
Let $g\in C^4(\Gamma)$ be a generating function satisfying conditions A1, A2, A1*, A3w, A5 and either A4w, A4*w or A3, with $C^4$ bounded domains
$\Omega$, $\Omega^*$ in $\mathbb{R}^n$ which are respectively uniformly $Y$-convex and uniformly $Y^*$-convex with respect to each other and  any interval  $J\subset\subset J_0$. Suppose also the function $\psi$ satisfies \eqref{psi}, \eqref{conservation of energy}. Then there exists an elliptic solution $u\in C^3(\bar \Omega)$ of the second boundary value problem \eqref{GPJE}, \eqref{Second boundary data}, whose range lies in $J_0$. Furthermore, the mapping $Tu$ is a $C^2$ smooth diffeomorphism from $\bar\Omega$ to $\bar \Omega^*$.
\end{theorem}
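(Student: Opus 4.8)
The plan is to prove Theorem \ref{Th1.1} by the method of continuity combined with degree theory for nonlinear oblique boundary value problems, following the overall strategy indicated in the abstract. The equation \eqref{GPJE} is a Monge-Amp\`ere type equation, and the target constraint \eqref{Second boundary data} can be reformulated as a nonlinear oblique boundary condition: if $\partial\Omega^*$ is given as a level set $\{y : h^*(y) = 0\}$ with $h^*$ defining the uniformly $Y^*$-convex domain $\Omega^*$, then \eqref{Second boundary data} is equivalent to $h^*(Y(x,u,Du)) = 0$ on $\partial\Omega$, and differentiating, the obliqueness is governed precisely by the uniform $Y$-convexity of $\Omega$. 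So the first step is to set up this boundary operator and verify strict obliqueness along the homotopy, using \eqref{Y-convex} and its dual.

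The second, and conceptually central, step is the construction of the homotopy and its initial member. Rather than deforming the domains, I would follow the approach advertised in the abstract: use the convexity theory of generating functions to produce, for the initial parameter value, a unique $g$-convex (elliptic) solution $u_0$ of a model problem — essentially the ``generated'' analogue of the Brenier-type solution in optimal transportation — by appealing to the structure of $g^*$ and conditions A1, A1*, A2. Concretely, one picks a particular point or a particularly simple right-hand side for which a solution can be written down or obtained from the duality \eqref{dual generating function g*}, and one deforms $\psi$ (keeping \eqref{conservation of energy}) from that model to the given $\psi$, keeping the domains fixed. One must check that A5 and the convexity hypotheses persist along the family, so that the range of any solution stays inside a fixed $J\subset\subset J_0$ and ellipticity is preserved; here condition A5 with $J_0 = (m_0,\infty)$ or $(-\infty,M_0)$ is what pins down the free additive-type constant and keeps $J_1[u](\Omega)\subset\mathcal U(\Omega,\Omega^*)$.

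The third step is the \emph{a priori} estimates. One needs, uniformly along the homotopy: (i) a $C^0$ bound, obtained by combining A5 with the boundary condition and the conservation of energy \eqref{conservation of energy} to trap the range of $u$ inside $\bar J$; (ii) a $C^1$ bound, which for these problems follows from the $C^0$ bound together with the domain convexity and the obliqueness, since gradient values are controlled through $\mathcal P(x,u,\Omega^*)$ and the barrier $h^*$; (iii) the global $C^2$ bound, which is exactly where the cited second derivative estimates of the authors \cite{JT-Pogorelov} enter — these hold under the weak co-dimension one convexity hypothesis A3w (with A4w or A4*w, or A3) and the uniform $Y$- and $Y^*$-convexity of the domains; and (iv) the uniform ellipticity, i.e.\ a positive lower bound for the eigenvalues of $D^2u - A(\cdot,u,Du)$, which upgrades the $C^2$ estimate via the equation and the positive bounds on $f,f^*$. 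Once uniform $C^{2,\alpha}(\bar\Omega)$ estimates are in hand through the nonlinear oblique boundary theory (Lieberman-Trudinger), the solution set of the homotopy is compact in $C^{2,\alpha}$, the linearized operator is invertible, and the Leray-Schauder degree is constant, nonzero at the initial member by uniqueness, hence nonzero at the given problem, yielding a solution $u\in C^{2,\alpha}(\bar\Omega)$. Schauder theory and $g\in C^4$ then bootstrap to $u\in C^3(\bar\Omega)$, and ellipticity plus $Y_p = E^{-1}$ makes $Tu = Y(\cdot,u,Du)$ a $C^2$ diffeomorphism onto $\bar\Omega^*$; the case of finite $J_0$ is handled by a final limiting argument.

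I expect the main obstacle to be the construction of a \emph{unique} initial solution for the homotopy purely from the generating-function convexity theory, without domain deformation — getting existence, uniqueness, ellipticity and the correct range simultaneously for the model problem, and verifying that the degree is genuinely nonzero there (rather than merely that the linearization is invertible). A secondary technical difficulty is ensuring that all the structural conditions — A3w/A3, A4w/A4*w, A5, and both convexity notions — are stable under the chosen homotopy so that the \emph{a priori} estimates of \cite{JT-Pogorelov} and the obliqueness apply uniformly; the asymmetry between the $Y$-convexity and $Y^*$-convexity definitions, both phrased through the $u$ variable, will need care at this point.
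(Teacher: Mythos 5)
Your outline follows the same overall route as the paper (reformulation of \eqref{Second boundary data} as a nonlinear oblique condition $G[u]=\phi^*\circ Tu=0$, a homotopy in the right-hand side with the domains fixed, second derivative bounds from \cite{JT-Pogorelov}, and the Fitzpatrick--Pejsachowicz/Li--Liu--Nguyen degree), but the two steps you yourself flag as the main obstacles are exactly where the paper does real work, and your proposal leaves them unresolved. First, the initial solution: it cannot simply be ``written down'' from duality. The paper spends all of Section \ref{Section 2} on it: one starts from a $g^*$-transform $g_\rho$ of a small ball function, whose image $Tu^0(\Omega)$ is a \emph{small subset} of $\Omega^*$; one then proves a geometric separation lemma (Lemma \ref{key lemma}, the analogue of inequality (7.3) in \cite{TW2009}, resting on the differential inequality $h_0''>0$ along $g$-segments), uses it to extend $u^0$ to $\Omega^\delta$ by an envelope of $g$-affine functions whose $g$-normal image is $\bar\Omega^*$, and then adjusts and mollifies. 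Even then one only obtains $Tu_0(\Omega)=(\Omega^*)^\epsilon$ for an \emph{approximating} target domain (Lemma \ref{Lemma - initial solution for approximating domains}), so a second limit $\epsilon\to 0$ in the targets is needed at the end --- your plan of ``keeping the domains fixed'' throughout is not what is actually achieved.

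Second, uniqueness at $t=0$ and the $C^0$ bound. The paper's homotopy \eqref{Homotopy family} carries the factor $e^{[\tau(1-t)+\epsilon](u-u_0)}$; integrating the equation against $f^*\circ Tu$ and using \eqref{conservation of energy} yields identity \eqref{using conservation Homo}, which forces $u=u_0$ at some point of $\Omega$, and only then does A5 (via $|Du|\le K_0$) convert this into $\sup_\Omega|u|\le M_0$. Your claim that ``condition A5 \dots is what pins down the free additive-type constant'' is not correct as stated: with $J_0=(m_0,\infty)$, A5 alone permits the solution to drift upward, since the problem is essentially invariant under shifting $z_0$. The same exponential factor, with $\tau$ large, is what makes the $t=0$ problem \emph{uniquely} solvable (Lemma \ref{Uniqueness lemma}, a maximum principle argument for $w=e^{-\kappa\phi}(u-u_0)$ using the concavity of $\log\det$ and the convexity of $G$ in $p$ coming from the uniform $Y^*$-convexity of $\Omega^*$), which together with the invertibility of the linearization is what makes the degree nonzero. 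Without specifying this (or an equivalent) mechanism, the proposal does not close either the uniqueness of the initial member or the zeroth order estimate, so the degree argument cannot be run. One smaller correction: the obliqueness of $G$ is a general consequence of ellipticity and A2, while it is the $Y^*$-convexity of $\Omega^*$ (not the $Y$-convexity of $\Omega$) that supplies the convexity of $G$ in $p$ used in the boundary part of the uniqueness argument.
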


Note that by varying $m_0$ or $M_0$ we obtain the existence of an infinite number of solutions. Also we note that elliptic solutions of \eqref{GPJE} will  be $g$-convex in the sense of \cite{T2014} under appropriate domain convexity conditions and this property is also crucial in our proof; (see Section \ref{Section 2}).

We remark that the {\it a priori} second order derivative estimates up to the boundary and the existence of classical solutions for the second boundary value problem for (far field) geometric optics problems are raised in \cite{GW1998} in the context of far field reflector antenna problems. Such problems are solved in the broader context of optimal transportation in \cite{TW2009}. In this paper, we consider more general situations of  second boundary value problems for generated prescribed Jacobian equations, which embrace those examples in near field optics problems in \cite{JT-Pogorelov, LT2016, T2014}. Moreover, we can avoid the $c$-boundedness of domains as in \cite{TW2009} or the $Y$-boundedness  as in \cite{Tru2006, LT2016}, since both the second derivative estimates in \cite{JT-Pogorelov} and the continuity method used in Section \ref{Section 3} do not depend on such conditions. Based on the second derivative estimate, Corollary 3.1 in \cite{JT-Oblique II}, we also have the existence of the second boundary value problem for more general augmented Hessian equations; (Remark \ref{Remark 3.4}).

This paper is organised as follows. In Section \ref{Section 2}, we construct a uniformly $g$-convex function which approximately satisfies the boundary condition \eqref{Second boundary data}. The construction is realised by extension and mollification of an initial construction of a uniformly elliptic function and uses the convexity theory of generating functions developed in \cite{T2014, T2014-1}. In Section \ref{Section 3}, we start from the uniformly $g$-convex function constructed in Section \ref{Section 2} to prove the existence result, Theorem \ref{Th1.1}, by using a more elaborate version of the degree argument  employed in \cite{LT2016}, which does not require domain deformation. We also give a more precise version of Theorem \ref{Th1.1}, which permits the interval $J_0$ to be finite; (Remark \ref{Remark 3.2}). Finally in Section \ref{Section 4}, we consider applications to problems in near field geometric optics, including more precise versions of the flat target cases in \cite{LT2016}.

\vskip10pt
%%%%%%%%%%%%%%%%%%%%%%%%%%%%%%%%%%%%%%%%%%%%%%%%%%%%%%%%%%%%%%%%%%%%%%%%%%%%%%%%%%%%%%%%%%%%%%%%%%%%%%%%%%%%%%%%%%%
\section{Construction of uniformly $g$-convex function}\label{Section 2}
In this section, we shall construct a uniformly $g$-convex function approximately satisfying the second boundary condition \eqref{Second boundary data}, in preparation for the homotopy argument in Section \ref{Section 3}.

\subsection{Initial construction}\label{Section 2.1}

We first recall some convexity notions with respect to the generating function in \cite{T2014} and then construct an initial uniformly elliptic function $u^0$ whose $Tu^0$ mapping over $\Omega$ is a subset of $\Omega^*$.

We recall from \cite{T2014} that a function $u\in C^0(\Omega)$ is $g$-convex, if for each $x_0\in\Omega$,
there exists $y_0 \in \mathbb{R}^n$,  $z_0 \in I(\Omega,y_0) =  \cap_{x\in\Omega} I(x,y_0)$ such that $u(x_0)=g(x_0,y_0,z_0)$ and $u(x) \ge g(x,y_0,z_0)$ for all $x\in\Omega$. If $u(x) > g(x,y_0,z_0)$ for all $x\ne x_0$, then we call u strictly $g$-convex. If a $g$-convex function $u$ is differentiable at $x_0$, then $y_0=Tu(x_0)=Y(x_0,u(x_0),Du(x_0))$, while if $u$ is twice differentiable at $x_0$, then
\begin{equation}\label{locally g-convex}
D^2u(x_0)\ge g_{xx}(x_0,y_0,z_0).
\end{equation}
A function $u\in C^2(\Omega)$ satisfying \eqref{locally g-convex} for all $x_0\in \Omega$ is called locally $g$-convex in $\Omega$. Moreover, the inequality \eqref{locally g-convex} implies that a locally $g$-convex function $u$ of \eqref{GPJE} is automatically degenerate elliptic. We call a $g$-convex function $u\in C^2(\Omega)$ uniformly $g$-convex if the inequality \eqref{locally g-convex} is strict, that is $u$ is also elliptic. Correspondingly, the function $g_0 = g(\cdot, y_0,z_0)$ is called a $g$-affine function, which is a $g$-support of $u$ at $x_0$ if $u(x_0)=g_0(x_0)$ and $u(x)\ge g_0(x)$ for all $x\in \Omega$.

We also recall the corresponding notion of  $g$-convexity for a domain $\Omega$ in \cite{T2014}. A domain $\Omega$ is $g$-convex (uniformly $g$-convex) with respect to $y_0\in  \mathbb{R}^n$, $z_0\in I(\Omega, y_0)$, if the image $Q_0(\Omega):= -g_y/g_z (\cdot, y_0,z_0)(\Omega)$ is convex (uniformly convex) in $\mathbb{R}^n$. From Lemma 2.4 in \cite{T2014}, under the assumptions that A1, A2 and A1* hold in $\mathcal{U}$,  a $C^2$ (connected) domain $\Omega$ being $g$-convex (uniformly $g$-convex) with respect to $y_0, z_0$ is equivalent to
\begin{equation}\label{g-convex domain}
[D_i\gamma_j(x)-g_{ij,p_k}(x,y_0,z_0)\gamma_k(x)]\tau_i\tau_j \ge 0, \ \ (>0),
\end{equation}
for all $x\in \partial\Omega$, unit outer normal $\gamma$ and unit tangent vector $\tau$. Then we see that a $C^2$ domain $\Omega$ is $Y$-convex (uniformly $Y$-convex) with respect to $\Omega^*\times J$ if $\Omega$ is $g$-convex (uniformly $g$-convex) respect to $y_0$ and $z_0$ for all $y_0\in \Omega^*$ and $z_0 = g^*(x,y_0,u_0)$ for all $x\in \Omega$ and $u_0\in J$. Conversely if $\Omega$ is $Y$-convex (uniformly $Y$-convex) with respect to $\Omega^*\times J$, then $\Omega$ is $g$-convex (uniformly $g$-convex) with respect to $y_0$ and $z_0$ for all $y_0\in \Omega^*$ and $z_0$ satisfying $g(\cdot, y_0,z_0)(\Omega)\subset J$. We also recall that for generating functions, the notions of $Y$*-convexity are equivalent to $g^*$-convexity.

Note that the local $g$-convexity of a function $u$ on a $g$-convex domain $\Omega$ can imply its global $g$-convexity. In particular, if we assume $g$ satisfies conditions A1, A2, A1*, A3w and A4w, $\Omega$ is $g$-convex with respect to each point in $(Y,Z)(\cdot, u,Du)(\Omega)$, $u\in C^2(\Omega)$ is locally $g$-convex in $\Omega$, (and $\Gamma$ is sufficiently large), then $u$ is $g$-convex in $\Omega$; see Lemma 2.1 in \cite{T2014}.
We remark also that condition A4w is removed in \cite{T2014-1}, Lemma 2.1, provided  $\Omega$ is $g$-convex with respect to each point $y\in Tu(\Omega)$, $z\in g^*(\cdot,y,u)(\Omega)$ and that the largeness of $\Gamma$ is ensured by assuming $u(\Omega) \subset\subset J(\Omega, Tu(\Omega))$ and $(x,u(x),p) \in \mathcal U$ for all $x\in\Omega$ and $p$ in the convex hull of $Du(\Omega)$. As a consequence, elliptic solutions $u$ of the second boundary value problem \eqref{GPJE}, \eqref{Second boundary data}, satisfying 
\begin{equation}\label{2.3}
[\inf u -K_0d, \sup u +K_0d] \subset J_0,
\end{equation}
\noindent where $d ={\rm diam} \Omega$, will be strictly $g$-convex under the hypotheses of Theorem \ref{Th1.1}. 
More generally if we strengthen the convexity assumption on $\Omega$ in Theorem \ref{Th1.1} so that $\Omega$ is  $g$-convex with respect to all $y\in \Omega^*$ and $z = g^*(x,y,u)$ for all $x \in \Omega$, $u\in J_0$, then elliptic solutions $u$ of  \eqref{GPJE}, \eqref{Second boundary data}, satisfying $u(\Omega) \subset\subset J_0$ will be strictly $g$-convex. 

Let $u\in C^0(\Omega)$ be $g$-convex in $\Omega$. The $g$-normal mapping of $u$ at $x_0\in \Omega$ is the set
$$Tu(x_0)=\{y_0\in  \Gamma_{z,\Omega} |\ u(x)\ge g(x, y_0, g^*(x_0,y_0,u(x_0)))\ {\rm for\ all}\ x\in \Omega \}.$$
For $E\subset \Omega$, we denote $Tu(E)=\cup_{x\in E}Tu(x)$.
When $u$ is differentiable, $Tu$ agrees with the previous terminology that $Tu=Y(x,u,Du)$. In general, we only have
$$Tu(x_0)\subset Y(x_0,u(x_0), \partial u(x_0)),$$
where $\partial u$ denotes the subdifferential of $u$. 
However, if the generating function satisfies the conditions A1, A2, A1*, A3w, (and again $\Gamma$ is sufficiently large), we then have for $g$-convex $u\in C^0(\Omega)$,
$$Tu(x_0)= Y(x_0,u(x_0), \partial u(x_0)),$$
 for any $x_0\in \Omega$; see Lemmas 2.2 in \cite{T2014, T2014-1} for detailed statements.  The reader can refer to \cite{T2014, T2014-1, GK2015} for the more detailed $g$-convexity theory related to generating functions.

Next, we show how to construct a uniformly $g$-convex function $u^0$ by a smooth perturbation of a $g$-affine function $g_0=g(\cdot,y_0,z_0)$.  Such a construction has already been established in \cite{JT-Pogorelov}. One can refer to Lemma 2.1 in \cite{JT-Pogorelov} for more details. We just sketch the construction of $u^0$ for completeness. Set $\Gamma(\Omega,\Omega^*)=\{(x,y,z)\in \Gamma| \ x\in \Omega, y\in \Omega^*, z\in I(x,y)\}$, suppose $g_0=g(\cdot, y_0,z_0)$ is a $g$-affine function on $\bar \Omega$, for $(y_0,z_0)\in \Omega^* \times I(\Omega, y_0)$. Now let $u^0 = g_\rho$ be the $g^*$-transform, introduced in \cite{T2014}, of the function
$$v_\rho(y)=z_0  - \sqrt{\rho^2 - |y-y_0|^2}$$
given by
\begin{equation}\label{2.4}
g_\rho(x)=v_\rho^*(x)=\sup_{y\in B_\rho}g(x,y,v_\rho(y)),
\end{equation}
where $B_\rho=B_{\rho}(y_0)$ and $\rho$ is sufficiently small to ensure that $\Gamma_0=\bar\Omega \times \bar B_\rho\times [z_0-\rho, z_0]\subset \bar \Gamma(\Omega,\Omega^*)$. Then $u^0$ is a uniformly $g$-convex function in $\bar \Omega$, with  image 
\begin{equation}
\omega^*:=Tu^0(\Omega)\subset B_{\rho}(y_0)\subset \Omega^*,
\end{equation}
where $Tu^0 = Y(\cdot,u^0,Du^0)$ is a diffeomorphism between $\Omega$ and $\omega^*$. We can also estimate
\begin{equation}
g_0- \sup_{\Gamma_0}|g_y| \rho \le g_\rho\le g_0,
\end{equation}
which shows that $g_\rho$ converges uniformly to $g_0$ as $\rho$ tends to zero. 

 We remark that under the hypotheses of Theorem \ref{Th1.1}, we can also determine a suitable $u^0$ so that $Tu^0(\Omega) = B_{\rho}(y_0)$ for $\rho$ sufficiently small. This is accomplished by using domain foliation in Section \ref{Section 3},  similarly to \cite{TW2009} and \cite{LT2016}.

\subsection{A fundamental geometric characterization}\label{Section 2.2}

In this subsection, we derive a geometric property of the uniformly $Y$-convex domain $\Omega$, which will be used in Section \ref{Section 2.3} to extend the initial construction $u^0$ in Section \ref{Section 2.1} from $\Omega$ to a neighbourhood $\Omega^\delta=\{x\in \mathbb{R}^n|\ {\rm dist}(x,\Omega)<\delta\}$ for some $\delta>0$. 

Suppose that the domains $\Omega$, $\Omega^*$ and generating function $g$  satisfy conditions A1, A2, A1*, A3w, A5, and $\Omega$, $\Omega^*$ are respectively uniformly $Y$-convex, $Y^*$-convex with respect to each other and any interval $J \subset\subset J_0$. We denote the unit outer normal of $\partial \Omega$ by $\gamma$ and let $u \in C^2(\bar\Omega)$ be
 $g$-convex and $g_0 = g(\cdot,y_0,z_0)$ 
 be a $g$-affine function defined on $\bar\Omega$ such that
 \begin{equation}\label{2.6} 
  Tu(\bar\Omega)\cup\{y_0\}\subset \Omega^*, \  u(\bar\Omega), g_0(\bar\Omega) \subset J_0. 
\end{equation}
 Letting $h = u-g_0$ denote the height of $u$ above $g_0$, we also assume for some boundary point $x_0\in\partial \Omega$
 \begin{equation}\label{h}
 h(x_0) = 0, \ Dh(x_0) = -s\gamma_0,
 \end{equation}
 where $\gamma_0 = \gamma(x_0)$ and $s$ is a positive constant.
 
 Note that in order to ensure the inclusions,  $u, g_0(\bar\Omega) \subset J_0$, we may assume $[u_0 - K_0 d, u_0 + K_0 d] \subset J_0$, where $u_0 = u(x_0) = g_0(x_0)$.
  
  The following key lemma shows that $h$ is positive away from $x_0$.

\begin{lemma}\label{key lemma}

Under the above hypotheses,  the functions $g_0,u$ satisfy
\begin{equation}\label{h>0}
g_0(x) < u(x), \quad {\rm for\ all} \  x\in\bar \Omega\backslash\{x_0\}.
\end{equation}
\end{lemma}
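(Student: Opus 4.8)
The plan is to reduce the strict inequality \eqref{h>0} to a comparison between two $g$-affine functions and then to obtain that comparison from the uniform $Y$-convexity of $\Omega$. Since $u$ is $g$-convex and $u\in C^2(\bar\Omega)$, it has a $g$-support at the boundary point $x_0$ (at interior points this is built into the definition, and one passes to $x_0$ by continuity of $Y(\cdot,u,Du)$ on $\bar\Omega$ together with $Tu(\bar\Omega)\subset\Omega^*$); because $u$ is differentiable at $x_0$, A1 forces this support to be $\bar g_0:=g(\cdot,\bar y_0,\bar z_0)$ with $\bar y_0=Y(x_0,u(x_0),Du(x_0))=Tu(x_0)$ and $\bar z_0=Z(x_0,u(x_0),Du(x_0))$, so that $u\ge\bar g_0$ on $\bar\Omega$ with $u(x_0)=\bar g_0(x_0)$. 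Both $\bar g_0$ and $g_0=g(\cdot,y_0,z_0)$ take the value $u_0:=u(x_0)$ at $x_0$, with gradients $\bar p_0:=Du(x_0)$ and $p_1:=g_x(x_0,y_0,z_0)$ respectively, and \eqref{h} gives $p_1=\bar p_0+s\gamma_0$. Hence it suffices to prove $g_0(x)<\bar g_0(x)$ for all $x\in\bar\Omega\setminus\{x_0\}$, since then $u(x)\ge\bar g_0(x)>g_0(x)$, i.e. $h(x)>0$.

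To compare $g_0$ with $\bar g_0$ I would interpolate the gradient at $x_0$: set $p_\theta:=\bar p_0+\theta s\gamma_0$ for $\theta\in[0,1]$, and let $G_\theta:=g(\cdot,y_\theta,z_\theta)$ where $(y_\theta,z_\theta):=(Y,Z)(x_0,u_0,p_\theta)$, so that $G_\theta(x_0)=u_0$, $DG_\theta(x_0)=p_\theta$, $G_0=\bar g_0$ and $G_1=g_0$. This is admissible: by \eqref{2.6} both $\bar y_0$ and $y_0$ lie in $\Omega^*$, and using the uniqueness in A1 one checks $Y(x_0,u_0,p_1)=y_0$ and $Y(x_0,u_0,\bar p_0)=\bar y_0$, so $\bar p_0,p_1\in\mathcal P(x_0,u_0,\Omega^*)$; since $\Omega^*$ is uniformly $Y^*$-convex, this set is convex and therefore contains the whole segment $\{p_\theta\}$, whence $y_\theta\in\Omega^*$ for all $\theta$. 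Moreover $|G_\theta(x)-u_0|=|G_\theta(x)-G_\theta(x_0)|\le K_0d$ by A5 once one knows $G_\theta(\bar\Omega)\subset J_0$, which follows by the standard continuation argument from the inclusion $[u_0-K_0d,\,u_0+K_0d]\subset J_0$ recorded before the statement; choosing $J\subset\subset J_0$ with $[u_0-K_0d,u_0+K_0d]\subset J$, it follows that $\Omega$ is uniformly $g$-convex with respect to each pair $(y_\theta,z_\theta)$, i.e. each image $Q(\cdot,y_\theta,z_\theta)(\Omega)$ is uniformly convex, and $Q(\cdot,y_\theta,z_\theta)$ is injective on $\bar\Omega$ by A1*.

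The heart of the matter is a monotonicity of $G_\theta(x)$ in $\theta$. Differentiating $G_\theta(x)=g(x,y_\theta,z_\theta)$, using $Y_p=E^{-1}$ together with the identity obtained by differentiating $g(x_0,Y,Z)=u_0$ in $p$, and then substituting $g_y=-g_zQ$ with $Q=-g_y/g_z$, one obtains, since $g_z<0$ by A2,
\begin{equation*}
\frac{d}{d\theta}G_\theta(x)= s\,|g_z(x,y_\theta,z_\theta)|\,\big[Q(x,y_\theta,z_\theta)-Q(x_0,y_\theta,z_\theta)\big]\cdot E^{-1}(x_0,y_\theta,z_\theta)\gamma_0 .
\end{equation*}
Now $Q(\cdot,y_\theta,z_\theta)$ maps $x_0$ to a boundary point of the uniformly convex domain $Q(\cdot,y_\theta,z_\theta)(\Omega)$, and — using that the Jacobian of $x\mapsto Q(x,y,z)$ is $-E^{t}/g_z$ and the sign conventions in A2 — the outer normal to that domain at $Q(x_0,y_\theta,z_\theta)$ is a positive multiple of $E^{-1}(x_0,y_\theta,z_\theta)\gamma_0$. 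Hence the bracketed quantity is $\le 0$ for every $x\in\bar\Omega$, and by strict convexity of the image together with the injectivity of $Q(\cdot,y_\theta,z_\theta)$ it is $<0$ for $x\ne x_0$; thus $\tfrac{d}{d\theta}G_\theta(x)<0$ for $x\in\bar\Omega\setminus\{x_0\}$. Integrating over $\theta\in[0,1]$ yields $g_0(x)=G_1(x)<G_0(x)=\bar g_0(x)$ for $x\ne x_0$, which gives \eqref{h>0}.

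I expect the main obstacle to be this last step: fixing the correct orientation of the outer normal of $Q(\cdot,y_\theta,z_\theta)(\Omega)$ (so that the sign of $\tfrac{d}{d\theta}G_\theta$ is genuinely negative, not positive), and making precise that it is the \emph{global} convexity of $Q(\cdot,y_\theta,z_\theta)(\Omega)$, rather than merely the infinitesimal condition \eqref{g-convex domain}, that upgrades positivity of $h$ near $x_0$ to positivity on all of $\bar\Omega\setminus\{x_0\}$. The remaining points — the existence of a $g$-support at $x_0$, the differentiation identity for $\tfrac{d}{d\theta}G_\theta$, and keeping $(y_\theta,z_\theta)$ inside $\Gamma(\Omega,\Omega^*)$ with all values in $J_0$ — are routine given A5 and the inclusion assumed before the statement.
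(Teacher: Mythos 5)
Your proof is correct, but it takes a genuinely different route from the paper's. You fix $x_0$ and interpolate in the gradient variable, joining the $g$-support $\bar g_0$ of $u$ at $x_0$ to $g_0$ by the family $G_\theta=g(\cdot,Y,Z)(x_0,u_0,\bar p_0+\theta s\gamma_0)$, and you derive a \emph{first-order} strict monotonicity $\tfrac{d}{d\theta}G_\theta(x)<0$ for $x\neq x_0$ which integrates directly to \eqref{h>0}. This is essentially the adaptation to generating functions of the proof of inequality (7.3) in \cite{TW2009}, which the paper explicitly mentions as an alternative but does not carry out; the paper instead fixes $(y_0,z_0)$, moves in $x$ along $g$-segments, and invokes the \emph{second-order} differential inequality $h_0''>0$ whenever $h_0=h_0'=0$ from \cite{T2014} (which rests on A3w), closing the argument by contradiction after perturbing $z_0$ and replacing $g_1$ by its uniformly $g$-convex approximation $g_\rho$. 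Both proofs ultimately hinge on the same geometric fact — that $E^{-1}(x_0,y_\theta,z_\theta)\gamma_0$ is a positive multiple of the outer normal to the uniformly convex image $Q(\cdot,y_\theta,z_\theta)(\Omega)$ at $Q(x_0,\cdot)$, which in the paper yields $h_0'(0)>0$ and in yours yields the sign of $\tfrac{d}{d\theta}G_\theta$ — but your version avoids the contradiction/approximation step and makes no use of A3w, at the price of invoking the uniform $Y^*$-convexity of $\Omega^*$ (to keep the segment $\{p_\theta\}$ inside $\mathcal P(x_0,u_0,\Omega^*)$, hence each $y_\theta\in\Omega^*$ and each $Q(\cdot,y_\theta,z_\theta)(\Omega)$ uniformly convex), which the paper's proof of this lemma does not need. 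The one place to be slightly more careful is the continuation argument giving $G_\theta(\bar\Omega)\subset J_0$: bounding oscillation by $K_0 d$ via A5 requires joining points of $\bar\Omega$ by admissible paths of controlled length (e.g.\ the $g$-segments supplied by the $g$-convexity of $\Omega$), but the paper makes exactly the same simplification in \eqref{2.3}, so this is not a gap peculiar to your argument.
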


Lemma \ref{key lemma} is a consequence of the special case when $u = g_1 =g(\cdot, y_1,z_1)$ is also $g$-affine. The property \eqref{h>0} in Lemma \ref{key lemma} then asserts that the domain $\Omega$ lies strictly on one side of the level set of the function $h=g_1-g_0$, passing through $x_0$, which is tangential to $\partial \Omega$ at $x_0$ by virtue of  \eqref{h}. This may  be  proved by modification of the proof of the corresponding inequality in the optimal transportation case, namely inequality (7.3) in \cite{TW2009}, which originated in \cite{TW2009-1}. The proof  presented here follows the approach in \cite{TW2008, T2014, T2014-1}, using a fundamental differential inequality for the function $h$. (Note that \cite{TW2008} should be substituted for reference [21] in \cite{TW2009}, and $c$ should be replaced by $-c$ in inequality (7.3) in \cite{TW2009} and its subsequent proof). 

\begin{proof}[Proof of Lemma \ref{key lemma}.]
First we suppose that $u$ is uniformly $g$-convex and $g_0 = g(\cdot,y_0,z_0)$ satisfies \eqref{2.6} but not necessarily \eqref{h}
 and let $x$ be some point in $\bar \Omega\backslash\{x_0\}$ so that by the uniform $g$-convexity of $\Omega$,  the open $g$-segment, with respect to $y_0,z_0$, joining $x$ to $x_0$ lies in $\Omega$. Setting $q_0= Q(x_0,y_0,z_0)$, $q= Q(x,y_0,z_0)$, $q_ t= (1-t)q_0 + tq$, for $0\le t\le 1$ we thus have  
$$x_t: = X(q_t, y_0, z_0) \in \Omega.$$
Defining the function $h_0$ on $[0,1]$ by $h_0(t) = h(x_t)$, we then have from \cite{T2014}, the differential inequality
\begin{equation}\label{h'' inequality}
h^{\prime\prime}_0 >0,
\end{equation}
whenever $h_0=h^\prime_0 =0$. 

Now let us suppose $u=g_1= g(\cdot, y_1,z_1)$ is $g$-affine with $h_0(0) = 0$, $h^\prime_0(0)> 0$ and let $u =g_\rho$ denote the uniformly $g$-convex approximation \eqref{2.3} to $g_1$, (with $y_0, z_0$ replaced by $y_1,z_1$) and set $h_\rho = g_\rho -g_0$,  $h_{\rho,0} =(h_\rho)_0$. Following \cite{T2014-1}, we suppose $h(x) = h_0(1) \le 0$. Then since we must have $h_0 > 0$, for small $t$, it follows that there exists
$\delta > 0$ so that $z_\delta = z_0-\delta \in I(\Omega,y_0)$, 
$\Omega$ is uniformly $g$-convex with respect to $y_0, z_\delta$ 
and, when $z_0$ is replaced by $z_\delta$ in $g_0$,
the function $h_{\rho,0}$ takes a zero maximum at some $t^*\in (0,1)$, for sufficiently small $\rho$, with $h_{\rho,0}(0), h_{\rho,0}(1) < 0$, which contradicts \eqref{h'' inequality}.  Consequently, using the formula $h^\prime_0(0) = D_\eta h(x_0)$, where the vector $\eta$ is given by
\begin{equation}
\eta_j = -g_z E^{i,j}(x_0,y_0,z_0)[q_i-(q_0)_i],
\end{equation}
where   $(E^{i,j})=(D_{p_j}Y^i)=E^{-1}$, we obtain $h(x) > 0$ for 
$x \in \bar\Omega\backslash\{x_0\}$, provided $h(x_0)$ = 0 and $D_\eta h(x_0) > 0$. 
Now from \eqref {h}, we have 
\begin{equation}\label{hard to understand}
D_\eta h(x_0) = g_z E^{i,j}(x_0,y_0,z_0)[q_i-(q_0)_i](\gamma_0)_j >0,
\end{equation}
and we conclude \eqref{h>0} since the vector $E^{-1}(x_0,y_0,z_0)\gamma_0$ is a positive multiple of the outer normal at $q_0$ to the uniformly convex domain $Q(\cdot, y_0,z_0)$. By replacing $g$-convex $u$ by its $g$-support at $x_0$, we obtain Lemma \ref{key lemma} in its full generality.
\end{proof}

\begin{remark}
When $u$ is uniformly $g$-convex or A3 or A4w hold, we do not need to use the approximation \eqref{2.3} in the above proof. This is automatic  when $u$ is uniformly $g$-convex while if A3 holds we also have the strict inequality \eqref{h'' inequality} when $u$ is only assumed $g$-convex. In the case A4w, we have from \cite{T2014} the differential inequality 
$$ h^{\prime\prime}_0 \ge -K|h^\prime_0|,$$
whenever $h_0\ge 0$, for some positive constant $K$, and we infer $h(x) > 0$ directly, without adjusting $z_0$, as in \cite{T2014}. Moreover if the strict version A4 of condition A4w holds, then we have again the strict inequality
 \eqref{h'' inequality} for $g$-convex $u$.
\end{remark}

\subsection{Extension}\label{Section 2.3}

In this subsection, we use the property \eqref{h>0} to extend our initial construction $u^0$ from $\Omega$ to $\Omega^\delta=\{x\in \mathbb{R}^n|\ {\rm dist}(x,\Omega)<\delta\}$, following the argument in the optimal transportation case \cite{TW2009}.
Note that we only need to extend $\Omega$ to a sufficiently small neighbourhood, so $\delta$ can be chosen sufficiently small. We may assume $u^0\in C^{\infty}(\bar \Omega)$ by approximation and make the extension using envelopes of $g$-affine functions. Recall that a $g$-affine function in $\Omega$ has the form $g_0 = g(\cdot, y_0,z_0)$ for $y_0\in\mathbb{R}^n$, $z_0\in I(\Omega, y_0)$ and its $g$-normal mapping image $Tg_0 (\Omega) = \{y_0\}$. We consider the following admissible set
$$\mathcal{S}=\{g_0(x)|\  g_0(x)\ {\rm is}\   g {\rm -affine \ in \ \Omega^\delta,} \ g_0\le u^0\ {\rm in}\ \Omega, \ Tg_0(\Omega)\subset \Omega^*\}, $$
and take
\begin{equation}
u_1(x)=\sup_{g_0\in \mathcal{S}}\{u^0, g_0\}, \quad x\in \Omega^\delta.
\end{equation}
Then the following lemma, extending Lemma 7.1 in \cite{TW2009}, describes the properties of  the function $u_1$.

\begin{lemma}\label{Lemma extension}
Assume that the domains $\Omega$, $\Omega^*$ and generating function $g$  satisfy conditions A1, A2, A1*, A3w, A5, and
 $\Omega$, $\Omega^*$ are respectively uniformly $Y$-convex, uniformly $Y^*$-convex with respect to each other and any interval $J \subset\subset J_0$. Then, for sufficiently small $\delta$ and $u^0$  satisfying \eqref{2.3}, the function $u_1$ is a 
 $g$-convex extension of $u^0$ from $\Omega$ to $\Omega^\delta$, whose $g$-normal image under $u_1$ is $\bar \Omega^*$. Moreover, for any $x\in \Omega^\delta - \bar \Omega$, there exist unique points $x_b \in \partial \Omega$, $y_b = Tu_1(x), \in \partial\Omega^*$, such that $Tu_1(\ell_{y_b}) =y_b$, where $\ell_{y_b}$ is the open $g$-segment with respect to $y_b$, $z_0 = g^*(x_b,y_b,u^0(x_b))$, joining $x_b$ to $x$, with the resultant mappings being $C^2$ diffeomorphisms from $\partial\Omega^r$ to $\partial\Omega$, $\partial\Omega^*$ respectively, for any $r<\delta$.
\end{lemma}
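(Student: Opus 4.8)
The plan is to adapt the optimal transportation argument from \cite{TW2009}, Lemma 7.1, with $g$-affine functions replacing $c$-support functions, using Lemma \ref{key lemma} as the crucial geometric input. First I would verify that $u_1$ is well defined and $g$-convex on $\Omega^\delta$: the admissible set $\mathcal S$ is nonempty (it contains $g$-supports of $u^0$ at interior points, whose images lie in $\omega^*\subset\subset\Omega^*$ by the construction in Section \ref{Section 2.1}), and each $g_0\in\mathcal S$ together with $u^0$ is bounded above on $\Omega^\delta$ using condition A5 and the uniform control \eqref{2.3} on the range, so the supremum is finite and, being a supremum of $g$-affine functions (extended by $u^0$), is $g$-convex on $\Omega^\delta$. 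The inclusions $u_1(\Omega^\delta)\subset J_0$ and $(x,u_1(x),p)\in\mathcal U$ for $p$ in the relevant convex hulls follow by taking $\delta$ small, which is needed to invoke the $g$-convexity and $g$-normal mapping theory recalled from \cite{T2014,T2014-1}. Next I would show $u_1 = u^0$ on $\bar\Omega$: clearly $u_1\ge u^0$ there; the reverse uses that every $g$-affine $g_0\le u^0$ on $\Omega$ stays $\le u^0$ on $\bar\Omega$ by continuity, so the sup does not exceed $u^0$ on $\Omega$, hence on $\bar\Omega$.

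The heart of the proof is the identification $Tu_1(\Omega^\delta)=\bar\Omega^*$ and the diffeomorphism structure near the boundary. For a boundary point $x_b\in\partial\Omega$ and the corresponding $y_b = Tu^0(x_b)\in\partial\omega^*$, I would instead consider, for each $y\in\partial\Omega^*$, the $g$-affine function $g_0 = g(\cdot,y,z)$ chosen so that $z = g^*(x_b,y,u^0(x_b))$ for the point $x_b\in\partial\Omega$ minimising $u^0-g_0$ over $\bar\Omega$; Lemma \ref{key lemma} (applied with this $g_0$ and $u=u^0$, after checking its hypotheses \eqref{2.6}, \eqref{h} hold — in particular $Dh(x_b) = -s\gamma(x_b)$ at the minimising boundary point, with $s>0$ since $y\notin\omega^*$) guarantees $g_0 < u^0$ on $\bar\Omega\setminus\{x_b\}$ and that $g_0$ touches $u^0$ only at $x_b$, tangentially to $\partial\Omega$. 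This means $g_0\in\mathcal S$, so $y\in Tu_1(\Omega^\delta)$; running over all $y\in\bar\Omega^*$ (interior points are already covered by $Tu^0(\Omega)$ after the foliation refinement, or directly) gives $\bar\Omega^*\subset Tu_1(\Omega^\delta)$, and the reverse inclusion is the constraint $Tg_0(\Omega)\subset\Omega^*$ built into $\mathcal S$ together with closedness.

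For the final diffeomorphism claim I would argue that for $x\in\Omega^\delta\setminus\bar\Omega$ the supremum defining $u_1(x)$ is attained by a unique $g$-affine $g_0 = g(\cdot,y_b,z_0)$, with $y_b = Tu_1(x)$; uniqueness of $y_b$ follows from strict $g$-convexity of $u^0$ (guaranteed under the hypotheses of Theorem \ref{Th1.1}, as recalled before Lemma \ref{key lemma}) together with Lemma \ref{key lemma}, which forces the contact set with $u^0$ to be the single point $x_b\in\partial\Omega$ and pins down $z_0 = g^*(x_b,y_b,u^0(x_b))$. Along the open $g$-segment $\ell_{y_b}$ with respect to $y_b,z_0$ from $x_b$ to $x$, the $g$-affine function $g_0$ lies above $u^0$ (off $\Omega$) and remains the active one, so $Tu_1\equiv y_b$ on $\ell_{y_b}$; this gives the foliation of $\Omega^\delta\setminus\bar\Omega$ by such segments. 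The maps $x\mapsto x_b$ and $x\mapsto y_b$ are then inverted by following segments, and their $C^2$ regularity on each level set $\partial\Omega^r$ comes from the implicit function theorem applied to the generating equations \eqref{generating equation}, using $\det E\ne 0$ (A2) and the strict $g$-convexity of $u^0$ together with uniform $Y^*$-convexity of $\Omega^*$ to ensure the relevant Jacobians are nonsingular.

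I expect the main obstacle to be the boundary matching and uniqueness: showing that for \emph{every} target boundary point $y\in\partial\Omega^*$ there is a genuinely admissible $g$-affine support touching $u^0$ only at one boundary point of $\Omega$, and that distinct $x$'s off $\Omega$ produce distinct pairs $(x_b,y_b)$, so that the claimed maps are bijections and not merely surjections. This is exactly where Lemma \ref{key lemma} does the work — it converts the uniform $Y$-convexity of $\Omega$ and uniform $Y^*$-convexity of $\Omega^*$ into the one-sidedness and single-point-contact properties — but care is needed to check its hypotheses \eqref{2.6} and \eqref{h} hold for the support functions in play, to choose $\delta$ small enough that all the $g$-convexity machinery of \cite{T2014,T2014-1} applies on $\Omega^\delta$, and to handle the dependence of $z_0$ on $x_b$ correctly when the target point is fixed but the contact point varies.
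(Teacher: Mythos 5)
Your overall strategy is the right one and matches the paper's for the first half: the envelope $u_1$ is a $g$-convex extension because, for each $x_b\in\partial\Omega$, the admissible $g$-affine functions touching $u^0$ at $x_b$ are exactly those with $y$ on the curve $\ell^*_{x_b}=Y(x_b,u^0(x_b),Du^0(x_b)+s\gamma(x_b))$, $s\ge 0$, and Lemma \ref{key lemma} gives the single-point contact. (One quibble there: interior points of $\Omega^*$ are \emph{not} ``already covered by $Tu^0(\Omega)$'' --- that image is only the small set $\omega^*\subset B_\rho(y_0)$; the points of $\Omega^*\setminus\omega^*$ are picked up by the multivalued images $Tu_1(x_b)\supset\ell^*_{x_b}$ as $x_b$ ranges over $\partial\Omega$.)

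The genuine gaps are in the second half, and both concern what happens \emph{outside} $\Omega$, where Lemma \ref{key lemma} (a statement on $\bar\Omega$) gives you nothing directly. First, you assert that along $\ell_{y_b}$ the function $\bar g_{x_b,y_b,z_b}$ ``remains the active one,'' i.e.\ dominates all competitors $\bar g_{x_b,y,z_y}$ with $y\in\ell^*_{x_b}$ at exterior points. The paper proves this by a separate application of \eqref{h>0} to a small \emph{exterior} tangent ball $B_r$ at $x_b$, which is itself uniformly $Y$-convex and for which $\gamma(x_b)$ is the \emph{inner} normal, yielding $g(x,y_b,z_b)\ge g(x,y,z_y)$ for $x\in B_r$ and hence the representation $u_1=\max_{x_b}\{u^0,\bar g_{x_b,y_b,z_b}\}$; without this step your foliation claim is unsupported. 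Second, your uniqueness argument addresses the wrong question: strict $g$-convexity of $u^0$ plus Lemma \ref{key lemma} show that a \emph{given} admissible $g$-affine function meets $u^0$ at only one point of $\bar\Omega$, but they do not show that for a given exterior point $x$ only one pair $(x_b,y_b)$ attains the maximum at $x$. The paper settles this by duality: it passes to the $g$-transform $v^0(y)=\sup_{x\in\Omega^\delta}g^*(x,y,u_1(x))$, notes $v^0(y)=g^*(x_b,y,u^0(x_b))$ on $\ell^*_{x_b}$, and runs the mirror-image touching argument on the uniformly $g^*$-convex domain $\Omega^*$ to produce, for each exterior $x$, a unique $y_b$ and the dual segment $\ell_{y_b}$ through $x$, whose transversality to $\partial\Omega$ then gives the $C^2$ diffeomorphisms onto $\partial\Omega^r$. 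This dual step is the missing idea; an appeal to the implicit function theorem alone will not produce the injectivity you need.
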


\begin{proof}
We take any $g$-affine function $g_0=g(x,y,z_0)$ in $\mathcal{S}$, with $y\in \Omega^*\backslash\omega^*$. Since $u^0$ is uniformly $g$-convex, by decreasing $z_0$, thereby increasing $g_0$, the graph of $g_0$ will touch $u^0$ from below at a point $x_b\in \partial\Omega$. Accordingly we may assume that the $g$-affine function $\bar g\in \mathcal{S}$, given by
$$\bar g(x):=\bar g_{x_b,y,z}(x)=g(x,y,z),$$
for the same $y$ in $g_0$ and some $z<z_0$, touches $u^0$ from below at $x_b\in \partial\Omega$, whence 
$z=z_y=g^*(x_b, y_b,u^0_b)$, where $u^0_b = u^0(x_b)$.
Since $\bar g \le u^0$ in $\Omega$, $\bar g(x_b)=u^0_b$, the point $y$ must lie on $\ell_{x_b}^*$,  which is the image under $Y(x_b,u^0_b,\cdot)$ of the straight line from $Du^0_b =Du^0(x_b)$ with the slope $\gamma_0$, that is
$$y=Y(x_b, u^0_b, Du^0_b+s\gamma_0)\in \ell_{x_b}^*,$$
for some $s\ge 0$, $\gamma_0 = \gamma(x_b)$. Moreover, $\ell_{x_b}^*$ starts at the point $y_{0,b}=Tu^0(x_b)$.
 Conversely, for any $x_b\in \partial \Omega$, $y\in \ell_{x_b}^*$, we have from \eqref{h>0},
 \begin{equation}
 \bar g(x) = g(x,y,z_y) <  u^0(x), \quad {\rm for}\ x\in \bar \Omega\backslash \{x_b\}.
\end{equation}
This proves that $u_1$ is indeed a $g$-convex extension of $u^0$ from $\Omega$ to $\Omega^\delta$.

To proceed further, from the uniform $Y^*$-convexity of $\Omega^*$,  $\ell_{x_b}^*$ intersects with $\partial \Omega^*$ at the unique point $y_b$, and from the uniform $g$-convexity of $u^0$, $\ell_{x_b}^*$ only intersects with $\partial \omega^*$ at the initial point $y_{0,b}$.
We then restrict $\ell_{x_b}^*$ to the segment joining $y_{0,b}$ and $y_b$. From the argument above, the mapping from $x_b$ to $y_b$ is onto $\partial \Omega^*$. From \eqref{h>0}, it is also one-to-one as the $g$-affine function $\bar g$ cannot meet $\partial\Omega$ at another point $x'$. It follows then the mapping from $x_b$ to $y_b$ is a $C^2$ diffeomorphism from $\partial\Omega$ to $\partial \Omega^*$.
Next, if $B_r$ is a sufficiently small exterior tangent ball of $\Omega$ at $x_b$, it will also be uniformly $Y$-convex. 
Defining $z_b= g^*(x_b,y_b,u^0_b)$, we then have from \eqref{h>0} again that
\begin{equation}
g(x,y_b,z_b) \ge  g(x,y,z_y),
\end{equation}
for all $x\in B_r$, $y\in \ell^*_{x_b}$. Note that $\gamma _0$ is now the inner normal at $x_b$ to $B_r$.
Thus, we have
\begin{equation}\label{u1new}
u_1 = \max_{x_b \in \partial\Omega}\{u^0, \bar g_{x_b,y_b,z_b}\}.
\end{equation}

To complete the proof of Lemma \ref{Lemma extension}, we need to show that for each $x\in \Omega^\delta\backslash\Omega$, there exists a unique $x_b\in \partial\Omega$, where the maximum in \eqref{u1new} is attained. For this, we invoke the $g$-transform of $u_1$,
$$v^0(y)=\sup_{x\in \Omega^\delta}\{g^*(x,y,u_1(x))\},\quad y\in \Omega^*,$$
which extends the $g$-transform of $u^0$ in $\omega^*$. Moreover, by $\bar g \le u^0$ in $\Omega$, we see that for $y\in \ell^*_{x_b}$, the supremum is attained at $x_b$. Hence, we have
\begin{equation}\label{v0y}
v^0(y)=g^*(x_b, y, u^0_b), \quad {\rm for\  all}\ y\in  \ell^*_{x_b}.
\end{equation}
One easily verifies that $v^0$ is smooth in $\bar \Omega^* \backslash\partial\omega^*$. Using \eqref{v0y} and arguing as before, we infer that for any point $x\in \Omega^\delta - \bar\Omega$, there exists a unique point $y_b\in \partial\Omega$ such that
\begin{equation}\label{g*y}
g^*_{x,y_b,u_0}(y) \le v^0(y),\quad \forall y\in \bar \Omega^*,
\end{equation}
for some $u_0$. Moreover, $x$ lies on $\ell_{y_b}$ which is the image under $X(y_b, v^0(y_b), \cdot)$ of the straight line segment from $Dv^0(y_b)$ with the slope $\gamma^*(b_b)$, namely,
$$x=X(y_b, v^0(y_b), Dv^0(y_b)+s\gamma^*(y_b))\in \ell_{y_b},$$
where $\gamma^*$ denotes the unit outer normal to $\partial \Omega^*$, $s\in [0,\bar \delta]$, and $\bar \delta$ is a small constant. Note that $x_b=X(y_b, v^0(y_b), Dv^0(y_b))$. From \eqref{g*y}, we see that the maximum in \eqref{u1new} is attained at $x_b$, $y_b$, so
$$u_1(x) = \bar g_{x, y_b, z_b}(x),\quad x\in \ell_{y_b},$$
with $Tu_1(\ell_{y_b}-\{x_b\})=y_b$, $Tu_1(x_b)=\ell^*_{x_b}$. From the obliqueness of $\ell_{y_b}$ on $\partial \Omega$, we have that the mapping from $x\in \Omega^r$ to $x_b$ is one-to-one for sufficiently small $r$. This completes the proof of Lemma \ref{Lemma extension}.
\end{proof}

\subsection{Adjustment and mollification}\label{Section 2.4}
In this subsection, we will make further adjustment and mollification of the extended function $u_1$. From the construction of $u_1$ in the previous subsection, we know that $u_1$ is smooth in $\Omega^\delta\backslash\partial\Omega$. Modifying $u_1$ in $\Omega^\delta\backslash \Omega$, by defining
\begin{equation}
u=\left\{
\begin{array}{ll}
u, & x\in \Omega,\\
u_1+td^2, & x\in \Omega^\delta\backslash \Omega,
\end{array}
\right.
\end{equation}
where $t$ is a small positive constant and $d$ denotes the distance from $\Omega$. It is readily seen that, for $\delta$ sufficiently small,
\begin{equation}
[D_{ij}u - A_{ij}(\cdot, u, Du)]\xi_i\xi_j \ge \lambda_0,
\end{equation}
in $\Omega^\delta\backslash \partial\Omega$ for some positive constant $\lambda_0$ and any unit vector $\xi$. Then the image of the $g$-normal mapping of $u$ in $\Omega^{\delta}$ is a small perturbation of $\Omega^*$ containing $\Omega^*$.

We can now mollify the function $u$ by
\begin{equation}\label{mollification}
u_\epsilon(x)=\rho \ast u = \int_{\mathbb{R}^n} \epsilon^{-n} \rho(\frac{x-y}{\epsilon})u(y)dy=\int_{\mathbb{R}^n}\rho(y)u(x-\epsilon y)dy,
\end{equation}
where $\rho\in C^\infty_0(B_1(0))$ is a nonnegative symmetric mollifier satisfying $\int_{B_1(0)}\rho =1$, $\epsilon$ is a positive constant. Taking $\epsilon<\delta/2$ sufficiently small and $x\in \Omega^{\frac{\delta}{2}}$, we will show that $u_\epsilon(x)$ is uniformly $g$-convex in $\Omega^{\frac{\delta}{2}}$. Note that the image of the $g$-normal mapping of $u_\epsilon$ in $\Omega^{\frac{\delta}{2}}$ is a smooth perturbation of $\Omega^*$. First, we recall some properties of $u_\epsilon$ from \cite{TW2009},
\begin{equation}\label{property gradient}
Du_\epsilon (x) = \int_{\mathbb{R}^n}\rho(y) Du(x-\epsilon y)dy,
\end{equation}
\begin{equation}\label{property second derivative}
D^2u_\epsilon (x) \ge \int_{\mathbb{R}^n\backslash \mathcal{P}_{x,\epsilon}}\rho(y) D^2u(x-\epsilon y)dy,
\end{equation}
where $\mathcal{P}_{x,\epsilon}:=\{y\in \mathbb{R}^n | \ x-\epsilon y \in \partial\Omega\}$.
We then divide $\Omega^{\frac{\delta}{2}}$ by $\Omega^{\frac{\delta}{2}}=U_1\cup U_2 \cup U_3$ where $U_1:=\{x\in \Omega^{\frac{\delta}{2}}|\ {\rm dist}(x,\partial\Omega) \ge \epsilon\}$, $U_2:=\{x\in \Omega^{\frac{\delta}{2}}|\ {\rm dist}(x,\partial\Omega) \in (\epsilon',\epsilon)\}$ and $U_3:=\{x\in \Omega^{\frac{\delta}{2}}|\ {\rm dist}(x,\partial\Omega) \le \epsilon'\}$, with $\epsilon'=(1-\sigma)\epsilon$ and $\sigma \in (1/2, 1)$ is a constant close to $1$. It is clear that $u_\epsilon$ is smooth and uniformly $g$-convex in $U_1$ provided $\epsilon$ is sufficiently small. Also, by choosing $\sigma$ sufficiently close to $1$, for any $x\in U_2$, from \eqref{mollification} and \eqref{property gradient}, $u_\epsilon$, $Du_\epsilon$ are small perturbations of $u$ and $Du$, respectively. By \eqref{property second derivative}, we have $u_\epsilon$ is smooth and uniformly $g$-convex in $U_2$. We next check the uniform $g$-convexity of $u_\epsilon$ in $U_3$. For any point $x_0\in U_3$, without loss of generality, we choose the nearest point of $x_0$ on $\partial\Omega$ to be the origin, and choose the direction pointing from $0$ to $x_0$ to be $e_n$ so that $\partial\Omega$ is tangent to $\{x_n=0\}$ and $x_0 = (0,\cdots, 0, x_{0,n})$. We choose a unit vector $\tau$ tangential to $\partial\Omega$ at $0$. Without loss of generality, we can assume $\tau =(1,0,\cdots, 0)$. Then we need to prove
\begin{equation}
D_{11}u_\epsilon (x_0) - A_{11}(x_0, u_\epsilon (x_0), Du_\epsilon (x_0)) >0.
\end{equation}
By the choice of coordinates, $D_1u_\epsilon(x_0)$ is a small perturbation of $D_1u(x_0)$. Notice that, from \eqref{mollification}, $u_\epsilon(x_0)$ is also a small perturbation of $u(x_0)$. It now suffices to prove
\begin{equation}\label{tangential uniform g-convexity}
D_{11}u_\epsilon (x_0) - A_{11}(x_0, u (x_0), D_1u(x_0), D'u_\epsilon (x_0)) >0.
\end{equation}
where $D'u_{\epsilon}=(D_2u_\epsilon, \cdots, D_nu_\epsilon)$. From A3w, $A_{11}$ is convex with respect to $D'u_\epsilon$. Therefore, we have
\begin{equation}\label{A integral inequality}
\begin{array}{ll}
\!\!&\!\!\displaystyle A_{11}(x_0, u (x_0), D_1u(x_0), D'u_\epsilon (x_0))\\
\le \!\!&\!\!\displaystyle \int_{\mathbb{R}^n} \epsilon^{-n} \rho(\frac{x_0-y}{\epsilon})A_{11}(x_0, u (x_0), D_1u(x_0), D'u(y))dy.
\end{array}
\end{equation}
From \eqref{A integral inequality}, and again from \eqref{property second derivative}, we have \eqref{tangential uniform g-convexity} holds. From the property of the second integral in (7.19) in \cite{TW2009}, we also have
\begin{equation}\label{normal uniform g-convexity}
D_{nn}u_\epsilon (x_0) - A_{nn} (x_0, u_\epsilon (x_0), Du_\epsilon (x_0))\ge K,
\end{equation}
for sufficiently large $K$, provided $\epsilon$ is sufficiently small. Combining \eqref{tangential uniform g-convexity} and \eqref{normal uniform g-convexity}, we know that $u_\epsilon$ is uniformly $g$-convex in $U_3$. Thus, we have proved that the function $u_\epsilon$ is smooth and uniformly $g$-convex in $\Omega^{\frac{\delta}{2}}$ for $\epsilon <\frac{\delta}{2}$ sufficiently small.

Then by appropriate adjustment of the domain $\Omega$, we have the following lemma, which gives the existence of  uniformly $g$-convex smooth functions with approximating target domains.

\begin{lemma}\label{Lemma - initial solution for approximating domains}
Let the domains $\Omega$, $\Omega^*$ and the generating function $g$ satisfy the hypotheses of Theorem \ref{Th1.1}. Then for any $\epsilon>0$, there exists a uniformly $g^*$-convex $C^4$ approximating domain $(\Omega^{*})^\epsilon$ lying within the distance $\epsilon$ of $\Omega^*$, together with a uniformly $g$-convex function $u\in C^4(\bar \Omega)$ satisfying the boundary condition \eqref{Second boundary data} for $(\Omega^{*})^\epsilon$.
\end{lemma}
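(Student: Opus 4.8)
The plan is to extract both the function $u$ and the domain $(\Omega^*)^\epsilon$ directly from the construction of Sections~\ref{Section 2.1}--\ref{Section 2.4}, but carried out on a slightly \emph{contracted} subdomain rather than on $\Omega$ itself. Concretely, fix a uniformly $Y$-convex domain $\tilde\Omega\subset\subset\Omega$ close to $\Omega$; running Sections~\ref{Section 2.1}--\ref{Section 2.4} with $\tilde\Omega$ in place of $\Omega$ produces constants $\delta,t>0$ and, for small $\epsilon>0$, a function $u_\epsilon\in C^\infty(\overline{\tilde\Omega^{\delta/2}})$ which is uniformly $g$-convex in $\tilde\Omega^{\delta/2}$. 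By first choosing $\tilde\Omega$ so that $\bar\Omega\subset\tilde\Omega^{\delta/2}$ (fix a small contraction, read off $\delta$, contract further if necessary), we set $u:=u_\epsilon|_{\bar\Omega}$ and $(\Omega^*)^\epsilon:=Tu(\Omega)=Y(\cdot,u,Du)(\Omega)$; this is the ``appropriate adjustment of the domain $\Omega$'' alluded to before the lemma. Immediately $u\in C^\infty(\bar\Omega)\subset C^4(\bar\Omega)$, $u$ is uniformly $g$-convex on $\bar\Omega$ by the estimate established at the end of Section~\ref{Section 2.4}, and the second boundary condition \eqref{Second boundary data} holds for $(\Omega^*)^\epsilon$ by construction. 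It remains to verify that $Tu$ is a diffeomorphism and that $(\Omega^*)^\epsilon$ is a $C^4$, uniformly $g^*$-convex domain lying within distance $\epsilon$ of $\Omega^*$.

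Since $u_\epsilon$ is uniformly $g$-convex in $\tilde\Omega^{\delta/2}$ it is elliptic for \eqref{GPJE}, so $\det DTu_\epsilon\neq0$ and $Tu_\epsilon$ is a local diffeomorphism there; global injectivity on $\bar\Omega$ follows because a uniformly $g$-convex function on the $g$-convex neighbourhood $\tilde\Omega^{\delta/2}$ is strictly $g$-convex and hence has injective $g$-normal map, by the convexity theory recalled in Section~\ref{Section 2} (Lemmas~2.1, 2.2 of \cite{T2014,T2014-1}). Thus $Tu$ is a $C^2$ diffeomorphism of $\bar\Omega$ onto $\overline{(\Omega^*)^\epsilon}$, carrying $\partial\Omega$ onto $\partial(\Omega^*)^\epsilon$, so that $(\Omega^*)^\epsilon$ is indeed a domain.

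The crux is the behaviour of $u_\epsilon$, and hence of $Tu_\epsilon$, near $\partial\Omega$. By the contraction, $\partial\Omega$ is a level surface $\partial\tilde\Omega^{s}$ with $0<s<\delta$, lying in the annular region $\tilde\Omega^\delta\setminus\bar{\tilde\Omega}$ on which, by Lemma~\ref{Lemma extension}, the extension $u_1$ is smooth, $Tu_1$ is constant along the $g$-segments $\ell_{y_b}$, and $Tu_1$ restricts to a $C^2$ diffeomorphism from $\partial\tilde\Omega^{s}$ \emph{onto} $\partial\Omega^*$. Choosing $\epsilon<s$, the $\epsilon$-mollification and the adjustment $u_1\mapsto u_1+td^2$ are, in a fixed neighbourhood of $\partial\Omega$, $C^2$-small perturbations of a smooth function (the relevant mollification balls remaining inside the smooth region $\tilde\Omega^\delta\setminus\bar{\tilde\Omega}$), so $Tu_\epsilon|_{\partial\Omega}$ tends in $C^2$ to $Tu_1|_{\partial\tilde\Omega^{s}}$ as $t,\epsilon\to0$. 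Consequently $\partial(\Omega^*)^\epsilon=Tu_\epsilon(\partial\Omega)$ is $C^2$-close to $\partial\Omega^*$; in particular $(\Omega^*)^\epsilon$ lies within distance $\epsilon$ of $\Omega^*$, and, since uniform $Y^*$-convexity (equivalently $g^*$-convexity) is an open condition in the $C^2$-topology on domains while $\Omega^*$ is uniformly $Y^*$-convex with a fixed positive margin under the hypotheses of Theorem~\ref{Th1.1}, the domain $(\Omega^*)^\epsilon$ is uniformly $g^*$-convex. A priori $\partial(\Omega^*)^\epsilon$ is only $C^3$ (since $g\in C^4$ yields $Y\in C^3$); a concluding harmless mollification of this boundary, within the same distance of $\Omega^*$ and preserving uniform $g^*$-convexity, delivers the required $C^4$ domain.

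The step I expect to be the main obstacle is exactly the uniform $g^*$-convexity of the target: one must arrange that $\partial(\Omega^*)^\epsilon$ is a $C^2$-small perturbation of $\partial\Omega^*$, and this is precisely why the construction has to be run on a contracted domain, so that $\partial\Omega$ falls where the $g$-normal image is $\partial\Omega^*$ \emph{before} the adjustment and mollification of Section~\ref{Section 2.4}; if one simply mollified and restricted to the original $\Omega$ one would instead land $\partial\Omega$ on an intermediate surface between $\partial\omega^*$ and $\partial\Omega^*$. The remaining points---the $C^2$ control on the boundary perturbations (immediate from the smoothness of $u_1$ away from $\partial\tilde\Omega$ and the smallness of $t,\epsilon$), the choice of $u^0$ keeping the ranges inside $J_0$, and the $g$-convexity bookkeeping for the injectivity of $Tu$---are routine given the results already established.
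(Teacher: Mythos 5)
Your proposal is correct and is essentially the paper's own argument: the paper offers no separate proof of this lemma beyond the construction of Sections \ref{Section 2.1}--\ref{Section 2.4} and the phrase ``by appropriate adjustment of the domain $\Omega$'', which you have correctly interpreted as first contracting $\Omega$ to $\tilde\Omega$ so that $\partial\Omega$ falls in the annular region where the extension $u_1$ maps diffeomorphically onto $\partial\Omega^*$ before the adjustment and mollification are applied. The one loose end is your concluding mollification of $\partial(\Omega^*)^\epsilon$ (prompted by $Y\in C^3$ giving only a $C^3$ image boundary), which as written would destroy the exact identity $Tu(\Omega)=(\Omega^*)^\epsilon$ that the lemma asserts -- a minor wrinkle the paper itself glosses over.
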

Note that if we do not make an adjustment of the domain $\Omega$, we get a uniformly $g$-convex function for  approximating domains for both $\Omega$ and $\Omega^*$.

%%%%%%%%%%%%%%%%%%%%%%%%%%%%%%%%%%%%%%%%%%%%%%%%%%%%%%%%%%%%%%%%%%%%%%%%%%%%%%%%%%%%%%%%%%%%%%%%%%%%%%%%%%%%%%%%%%%
\section{Proof of existence theorems}\label{Section 3}

In this section, we give the proof of the existence result, Theorem \ref{Th1.1}, utilizing the method of continuity, supplemented by  degree theory for nonlinear oblique boundary value problems, as in \cite{FP1986, L1989, FP1993, LT2016, LLN2015}. From Lemma \ref{Lemma - initial solution for approximating domains}, we can assume initially that there exists a uniformly $g$-convex function $u_0\in C^4(\bar\Omega)$ satisfying \eqref{Second boundary data}, that is $Tu_0(\Omega) = \Omega^*$. From our construction 
in Section \ref{Section 2}, we can also assume the inclusion \eqref{2.3}.
We will also need that  the second boundary value condition \eqref{Second boundary data} implies a nonlinear oblique boundary condition for uniformly elliptic functions $u$, \cite{Tru2008}. In particular, letting   $\phi^*\in C^2(\mathbb{R}^n)$
 be a defining function for $\Omega^*$, satisfying $\phi^*=0$, $D\phi^*\neq 0$ on $\partial\Omega^*$, $\phi^*<0$ in 
$\Omega^*$, $\phi^*> 0$ in $\mathbb{R}^n - \bar\Omega^*$ and setting, 
\begin{equation}\label{G}
G(x,u,p)=\phi^* \circ Y(x, u, p),
\end{equation}
for $(x,u,p) \in\mathcal U$, we obtain  
\begin{equation}\label{BC}
G[u]:=G(\cdot,u, Du)=0, \quad {\rm on} \ \partial\Omega,
\end{equation}
together with the obliqueness condition,
\begin{equation}
G_{p}(\cdot, u, Du)\cdot \gamma > 0, \quad {\rm on} \ \partial\Omega.
\end{equation}
Furthermore, the $Y^*$-convexity (uniform  $Y^*$-convexity) of $\Omega^*$ with respect to $\Omega\times J$ implies that 
$G$ is convex (uniformly convex) in $p$ for $x\in \partial\Omega$, $u \in J$, $Y(x,u,p)\in \partial \Omega^*$, and additionally
 the uniform $Y^*$-convexity of $\Omega^*$ implies that $G=\phi^* \circ Y$ is uniformly convex in $p$ when $Y$ lies in some neighbourhood $\mathcal{N}^* = \{|\phi^*|< \delta\}$ of $\partial\Omega^*$, for some $\delta > 0$. In particular, these properties are essential for showing that the initial problem in our homotopy family \eqref{Homotopy family} is uniquely solvable; (see Lemma \ref{Uniqueness lemma} below). We remark also that the boundary condition \eqref{Second boundary data} is implied by  \eqref{BC} when $Tu$ is also one-to-one on $\bar\Omega$ and this would follow from the ellipticity of $u$ on $\bar\Omega$, together with the $g$-convexity of 
 $\Omega$ with respect to some $y_0,z_0$. 

 We now consider for $0\le t \le 1$, $\tau >0$ and $\epsilon>0$, the family of generated prescribed Jacobian equations,
\begin{equation}\label{Homotopy family}
 |\det DTu| = e^{[\tau (1-t)+\epsilon](u-u_0)}[tf+(1-t)f^*\circ Tu_0|\det(DTu_0)|]/f^*\circ Tu, \quad {\rm in} \ \Omega,\\
 \end{equation}
for elliptic solutions $u \in C^2(\bar\Omega)$, with one-jet $J_1[u](\Omega)\subset\subset \mathcal U$, and range
$u(\Omega) \subset\subset J_0$,  where $Tu = Y(\cdot, u, Du)$.  Here we call the solution $u$ elliptic if $D^2u > A(\cdot,u,Du)$, so that equation \eqref{Homotopy family} can still be written in the forms \eqref{PJE} and \eqref{GPJE}; 
(see equation \eqref{equation MA form}.

\begin{lemma}\label{Uniqueness lemma} 
Under the hypotheses of Theorem \ref{Th1.1},  for sufficiently large $\tau$, $u_0$ is the unique elliptic solution of the second boundary value problem \eqref{Homotopy family}, \eqref{Second boundary data} at $t=0$. 
\end{lemma}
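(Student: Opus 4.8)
The plan is to prove uniqueness by a comparison argument applied to the homotopy equation at $t=0$, which reads $|\det DTu| = e^{(\tau+\epsilon)(u-u_0)} f^*\!\circ\! Tu_0\,|\det DTu_0|/f^*\!\circ\! Tu$. Rewriting this in the Monge-Amp\`ere form \eqref{GPJE}, it becomes $\det[D^2u - A(\cdot,u,Du)] = \det E(\cdot,Y,Z)\,\psi_0(\cdot,u,Du)$ with a right-hand side that is \emph{strictly monotone increasing in $u$} once $\tau$ is chosen large: the explicit exponential factor $e^{(\tau+\epsilon)(u-u_0)}$ dominates any $u$-dependence coming from $\det E$, $f^*\!\circ\! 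Y$ and, crucially, from $A$ itself through the determinant term (here one uses the $C^0$ and $C^1$ a priori bounds valid along the homotopy, together with A5, to control all these quantities uniformly). Thus for $\tau$ large the operator $u\mapsto \mathcal F[u]$ minus its right-hand side is a \emph{proper} elliptic operator in the sense that the zeroth-order term is strictly increasing in $u$; $u_0$ is by construction a solution.

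First I would suppose $u$ is another elliptic ($g$-convex) solution of \eqref{Homotopy family}, \eqref{Second boundary data} at $t=0$ and consider $w = u - u_0$. Both $u$ and $u_0$ satisfy the same second boundary condition $Tu(\Omega)=Tu_0(\Omega)=\Omega^*$, hence both satisfy the oblique boundary condition $G[u]=G[u_0]=0$ on $\partial\Omega$ with $G = \phi^*\!\circ\! Y$ convex in $p$ (by the uniform $Y^*$-convexity of $\Omega^*$). Subtracting the two equations and using the concavity of $M\mapsto (\det M)^{1/n}$ on positive matrices, together with the mean value theorem, $w$ satisfies a linear elliptic equation $a^{ij}D_{ij}w + b^i D_i w + c\,w = 0$ in $\Omega$ with $a^{ij}>0$ and, by the strict monotonicity established above, $c < 0$; on $\partial\Omega$, $w$ satisfies a linear oblique condition $\beta^i D_i w + c_0 w = 0$ with $\beta\cdot\gamma>0$ and $c_0 \le 0$ (the latter sign coming from convexity of $G$ in $p$ combined with $G[u]=G[u_0]=0$; this is the standard linearisation of the oblique condition in \cite{Tru2008, LT2016}). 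The maximum principle for such oblique boundary value problems then forces $w\equiv 0$: at an interior positive maximum the interior equation with $c<0$ is violated, and at a boundary positive maximum the Hopf lemma together with the oblique condition with $c_0\le 0$ is violated; the symmetric argument rules out a negative minimum.

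The main obstacle is \emph{getting the signs in the linearised problem right, uniformly along the homotopy}. Two points need care. Interior: one must verify that for $\tau$ sufficiently large the full $u$-derivative of $\det E(\cdot,Y,Z)\,e^{(\tau+\epsilon)(u-u_0)} f/f^*\!\circ\! Y$, \emph{after} dividing the Monge-Amp\`ere equation through appropriately and accounting for the $A(\cdot,u,Du)$-dependence inside $\det[D^2u - A]$, yields a strictly negative coefficient $c$ in the linearisation — this is where conditions A4w/A4*w (or A3) and the a priori $C^1$ bounds, which are independent of $c$-boundedness as emphasised in the introduction, enter to bound the "bad" terms so that the exponential wins. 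Boundary: one must confirm $c_0\le 0$, i.e. that the linearisation of $G[u]=0$ about $u_0$ has the correct sign; this follows from $G(x,\cdot,\cdot)$ being convex in $p$ at boundary points where $Y\in\partial\Omega^*$, so that $G(x,u,Du) - G(x,u_0,Du_0) \ge G_p(x,u_0,Du_0)\cdot(Du - Du_0) + G_u(x,u_0,Du_0)(u-u_0)$, and a companion lower bound, pinning the zeroth-order boundary coefficient to the sign of $G_u$, which is controlled (or absorbed) under the hypotheses. Once both signs are secured, the conclusion is immediate from the classical strong maximum principle and Hopf lemma for oblique problems.
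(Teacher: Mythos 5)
Your overall strategy (comparison/maximum principle for the $t=0$ problem, with the large parameter $\tau$ making the zeroth-order term dominate) is the same as the paper's, but two steps that you wave through are precisely where the real work lies, and as written they do not go through.

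First, the boundary sign. You assert that the linearised oblique condition has zeroth-order coefficient $c_0\le 0$, ``controlled (or absorbed) under the hypotheses.'' It is not: convexity of $G$ in $p$ pins down only the gradient part, and the zeroth-order boundary coefficient is essentially $G_u=D\phi^*\cdot Y_u$, whose sign is not governed by any of A1--A5 or the convexity hypotheses. The paper's device is to work with the weighted difference $w=e^{-\kappa\phi}(u-u_0)$, where $\phi$ is a defining function of $\Omega$; at a boundary maximum this produces the extra term $\kappa\,\beta_0\cdot\gamma\,(u-u_0)$, which for $\kappa$ large (depending on $G_u$, $\beta_0\cdot\gamma$, $M_0$, $K_0$) overwhelms the uncontrolled $G_u$ contribution. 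This weight then re-enters the interior computation (the constant $C$ there depends on $\kappa$), so the order of choices matters: $\kappa$ first, then $\tau>C(\kappa)$. Your proposal contains neither the weight nor this interplay.

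Second, the ``symmetric argument'' for the negative minimum is circular. To get a usable linearisation you must exploit the concavity of $\log\det$ (or $\det^{1/n}$) \emph{around the known function $u_0$}, which yields a one-sided inequality $F^{ij}(M[u_0])(M[u]-M[u_0])_{ij}\ge F[u]-F[u_0]$ with coefficients $F^{ij}(M[u_0])$ that are a priori bounded. This only rules out a positive maximum of $w$. Reversing the roles (or using the mean-value form along the segment) puts $F^{ij}(M[u])$, or $F^{ij}$ evaluated at intermediate matrices, into the coefficients; these involve $D^2u$ of the unknown competitor and are not controlled at this stage, so ``choose $\tau$ large so that $c<0$'' would make $\tau$ depend on the very solution you are trying to exclude. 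The paper closes the argument differently: having shown $u\le u_0$, it invokes the integral identity obtained by multiplying the equation by $f^*\circ Tu$, changing variables and using the conservation of energy \eqref{conservation of energy}, namely \eqref{using conservation Homo}, which at $t=0$ forces $e^{(\tau+\epsilon)(u-u_0)}\equiv 1$ once $u-u_0\le 0$. This identity (which also yields that $u=u_0$ somewhere, needed for the $C^0$ bound) is an essential ingredient that your proposal omits entirely.
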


\begin{proof}
First suppose $u\in C^2(\bar\Omega)$ is an elliptic solution of \eqref{Homotopy family}, \eqref{Second boundary data} for some $t\in [0,1]$, so that $Tu$ is a diffeomorphism  from $\Omega$ to $\Omega^*$. Multiplying by $f^*\circ Tu$ and integrating \eqref{Homotopy family} over $\Omega$, we obtain, from the change of variables formula and the conservation of energy \eqref{conservation of energy}, 
\begin{equation}\label{using conservation Homo}
 t  \int_{\Omega} \{e^{[\tau (1-t)+\epsilon] (u -u_0)}-1\} f +(1-t)  \int_{\Omega^*}\{e^{[\tau (1-t)+\epsilon] (u -u_0)\circ(Tu_0)^{-1}}-1\}f^* = 0,
 \end{equation}
which implies that $u = u_0$ at some point in $\Omega$. From A1, A5, and the assumption that $u$ lies in the  open interval $J_0$, we have
\begin{equation}
  \sup_{\Omega}|Du| \le K_0,
\end{equation}
where $K_0$ is the constant in A5. From \eqref{2.3} we then have $u(\bar\Omega) \subset J$ for some fixed $J \subset\subset J_0$, so that in particular 
\begin{equation}
  \sup_{\Omega}|u| \le M_0,
\end{equation}
for some fixed constant $M_0$, depending on $u_0, K_0d$ and $J_0$. Thus any elliptic solution $u$ of \eqref{Homotopy family}, \eqref{Second boundary data} satisfies a uniform $C^1$ bound
\begin{equation} \label{gradient bound}
|u|_{1;\Omega} \le C.
\end{equation}
with its one-jet $J_1[u](\bar \Omega)$ lying in a fixed set  $\mathcal U_0 \subset\subset \mathcal U$. Writing equation \eqref{Homotopy family} in the Monge-Amp\`ere form,
\begin{equation}\label{equation MA form}
F[u] := \log \det [D^2u -A(\cdot,u,Du)] =  [\tau (1-t)+\epsilon] (u -u_0) + \log B_t(\cdot,u,Du),
\end{equation}
where
$$B_t = |\det E| [tf+(1-t)f^*\circ Tu_0|\det DTu_0|]/f^*\circ Y,$$
we can then infer higher order estimates for elliptic solutions $u$, which we will need for our continuity argument. 

Returning to the uniqueness assertion in Lemma \ref{Uniqueness lemma} in the case $t=0$, we consider the function $w$ given by
\begin{equation}
w = e^{-\kappa \phi} (u-u_0),
\end{equation}
where $\kappa$ is a positive constant to be fixed later and $\phi \in C^2 (\bar\Omega)$ is a defining function for $\Omega$, satisfying  $\phi=0$ on $\partial\Omega$, $D\phi = \gamma $ on $\partial\Omega$ and $\phi<0$ in $\Omega$.  Assuming that the function $w$ attains its positive maximum at $x_0\in \Omega$, then we have
\begin{equation}\label{formulae at max}
Dw(x_0)=0, \quad {\rm and} \ \ Lw(x_0) \le 0,
\end{equation}
where $L$ is a linearized operator defined by
\begin{equation}
L: = F^{ij}(M[u_0])D_{ij},
\end{equation}
with $F^{ij}(M[u_0])=\frac{\partial F(M[u_0])}{\partial r_{ij}}$, $\{r_{ij}\}=M[u_0]=D^2u_0-A(x,u_0,Du_0)$. By a direct calculation, we have 
\begin{equation}\label{Lwx0}
\begin{array}{ll}
L w(x_0) = \!\!&\!\!  F^{ij}(M[u_0](x_0)) \{e^{-\kappa \phi(x_0)}[(D_{ij}u(x_0)-A_{ij}(x_0,u(x_0),Du(x_0))) \\
         \!\!&\!\!  -(D_{ij}u_0(x_0)-A_{ij}(x_0,u_0(x_0),Du_0(x_0)))] \\
         \!\!&\!\!  + e^{-\kappa \phi(x_0)}[A_{ij}(x_0,u(x_0),Du(x_0))-A_{ij}(x_0,u_0(x_0),Du_0(x_0))\\
         \!\!&\!\!  -2\kappa  D_i\phi(x_0)D_j(u-u_0)(x_0) ] + \kappa [ - D_{ij}\phi(x_0) + \kappa D_i\phi(x_0) D_j\phi(x_0)] w(x_0)\}.
\end{array}
\end{equation}
Using the concavity of ``$\log\det$'' and equation \eqref{equation MA form} at $t=0$, we have
\begin{equation}\label{concavity of F}
\begin{array}{ll}
         \!\!&\!\!  e^{-\kappa \phi(x_0)}F^{ij}(M[u_0](x_0))[(D_{ij}u(x_0)-A_{ij}(x_0,u(x_0),Du(x_0))) \\
         \!\!&\!\!  -(D_{ij}u_0(x_0)-A_{ij}(x_0,u_0(x_0),Du_0(x_0)))] \\
   \ge      \!\!&\!\! e^{-\kappa \phi(x_0)} (F[u(x_0)]-F[u_0(x_0)]) \\
   =         \!\!&\!\! (\tau + \epsilon) w(x_0) + e^{-\kappa \phi(x_0)} [\log B_0(x_0, u(x_0), Du(x_0)) - \log B_0(x_0, u_0(x_0), Du_0(x_0))].
\end{array}
\end{equation}
By the mean value theorem, we have
\begin{equation}\label{mean value theorem}
\begin{array}{ll}
         \!\!&\!\!  A_{ij}(x_0,u(x_0),Du(x_0))-A_{ij}(x_0,u_0(x_0),Du_0(x_0)) \\
   =    \!\!&\!\! D_uA_{ij}(x_0,\hat u(x_0), Du(x_0))(u-u_0)(x_0) + D_{p_k}A_{ij}(x_0,u_0(x_0),\hat p(x_0))D_k(u-u_0)(x_0),
\end{array}
\end{equation}
for all $i, j=1,\cdots, n$, where $\hat u=(1-\theta_1) u+ \theta_1 u_0$, $\hat p = (1-\theta_2)Du+\theta_2 Du_0$, for some $0<\theta_1, \theta_2 <1$. 
Similarly, again by the mean value theorem, we have
\begin{equation}\label{again mean value theorem}
\begin{array}{ll}
         \!\!&\!\!  \log B_0(x_0,u(x_0),Du(x_0))-\log B_0 (x_0,u_0(x_0),Du_0(x_0)) \\
   =    \!\!&\!\! D_u(\log B_0)(x_0,\tilde u(x_0), Du(x_0))(u-u_0) (x_0) \\ 
         \!\!&\!\! + D_{p_k}(\log B_0)(x_0,u_0(x_0),\tilde p(x_0))D_k(u-u_0)(x_0),
\end{array}
\end{equation}
where $\tilde u=(1- \zeta_1) u+ \zeta_1 u_0$, $\tilde p = (1-\zeta_2)Du+\zeta_2 Du_0$, for some $0<\zeta_1, \zeta_2 <1$.
From the equality in \eqref{formulae at max}, we have
\begin{equation}\label{critical point}
D_i(u-u_0)(x_0)=\kappa (u-u_0)(x_0) D_i \phi(x_0), \quad {\rm for} \ i=1,\cdots, n.
\end{equation}
Substituting \eqref{concavity of F}, \eqref{mean value theorem}, \eqref{again mean value theorem} into \eqref{Lwx0} and using \eqref{critical point}, we get
\begin{equation}
L w(x_0) \ge (\tau + \epsilon - C) w(x_0),
\end{equation}
where the constant $C$ depends on $F^{ij}(M[u_0])$, $D_uA$, $D_p A$, $D_uB_0$, $D_pB_0$, $B_0$, $M_0$, $K_0$, $\phi$ and $\kappa$.
By choosing $\tau$ sufficiently large such that $\tau + \epsilon >C$, we get $L w(x_0)>0$. It follows that the function $w$ cannot take a positive maximum in 
$\Omega$. 

Accordingly we suppose that $w$ takes a positive maximum at a  point $x_0\in\partial\Omega$, whence 
\begin{equation}
\beta_0\cdot Dw(x_0)  = e^{-\kappa \phi(x_0)}[\beta_0\cdot D(u-u_0)(x_0) - \kappa \beta_0\cdot\gamma(x_0)(u-u_0)(x_0)] \ge 0, 
\end{equation}
where $\beta_0= G_p(J_1[u_0](x_0))$ and from the obliqueness of $G$ with respect to $u_0$, we have 
$\beta_0\cdot\gamma(x_0) >0$. Now we extend $G$ so that  $G \in C^1(\partial\Omega\times J\times \mathbb{R}^n)$ is convex in $p$ and agrees with \eqref{G} for $|\phi^*\circ Y| < \delta/2$. From the convexity of $G$ and the boundary condition \eqref{BC}, we then have
\begin{equation}
G(x_0, u_0(x_0), Du(x_0)) \ge \beta_0\cdot D(u-u_0)(x_0) \ge \kappa \beta_0\cdot \gamma(x_0)(u-u_0)(x_0).
\end{equation} 
Now for sufficiently large $\kappa$, depending on $G_u, M_0 , K_0$ and $\beta_0\cdot\gamma(x_0)$, we have from \eqref{BC}   again,
\begin{equation}
G(x_0, u_0(x_0), Du(x_0))<  \kappa \beta_0\cdot\gamma(x_0)(u-u_0)(x_0).
\end{equation}
Consequently we must have $u\le u_0$ in $\Omega$ and we immediately conclude $u=u_0$ in $\Omega$ from \eqref{using conservation Homo}.
\end{proof}

Now we can complete the proof of Theorem \ref{Th1.1}. With $\tau$ fixed, in accordance with Lemma \ref{Uniqueness lemma} and $\epsilon$ sufficiently small, say $\epsilon < 1$, we first note from \eqref{gradient bound} and 
$J_1[u](\Omega) \subset \mathcal U_0$ that $|\det E|$ and $B_t$ will have uniform positive lower bounds for elliptic solutions 
of \eqref{Homotopy family}, \eqref{Second boundary data}. Invoking the alternative conditions A4w or A4*w or A3, we then have uniform global second derivative estimates from  \cite{JT-Pogorelov}, (in the first two cases),  and \cite{LT2016}, (in the last case). From the H\" older estimates for second derivatives  \cite {LieTru1986} and the linear theory \cite{GT2001}, we have uniform estimates in the spaces $C^{4,\alpha} (\bar\Omega)$ for $\alpha < 1$, provided $\Omega$ is smooth enough, say $C^5$. Accordingly there exists a bounded open set $\mathcal O$ in $C^{4,\alpha} (\bar\Omega)$ such that the operators $\mathcal F$ and $\mathcal G$ are respectively elliptic and oblique with respect to all $u\in\mathcal O$ and the boundary value problems (3.4), (1.6) have no elliptic solutions in $\partial\mathcal O$. Furthermore the set $\mathcal O$ can be chosen so that $u_0 \in \mathcal O$ and $Tu$ is one-to-one on $\bar \Omega$ for all $u \in \mathcal O$.  This latter property  implies that  condition \eqref{Second boundary data} is in fact equivalent to the oblique condition \eqref {BC} for our solutions,  so that we then conclude the solvability of the boundary value problem \eqref{Homotopy family}, \eqref{Second boundary data}, at $t=1$, from the degree theory for oblique boundary value problems, explicitly from Case (ii) of Theorem 10.23, (with $k=m_1 =1$),  in \cite{FP1993} or from assertions
(a) and (d) of  Corollary 2.1 in \cite{LLN2015}. For this, as well as the uniqueness of our initial solution $u_0$, we also need to observe, from the proof of Lemma \ref{Uniqueness lemma}, that the linearized operator $\mathcal L$, associated with the boundary value problem \eqref{equation MA form}, \eqref{BC} at $t=0$ and $u=u_0$, is one-to-one, whence by virtue of the Schauder theory \cite{GT2001}, $\mathcal L$ is an isomorphism from $C^{4,\alpha} (\bar\Omega)$ to $C^{2,\alpha} (\bar\Omega)\times C^{3,\alpha} (\partial\Omega)$. Note that the treatment of the second order case in \cite{LLN2015},
based on the Dirichlet problem case in \cite {L1989}, is somewhat simpler than the general theory in \cite{FP1993} and the degree constructed there is homotopy invariant whereas in the generality of quasilinear Fredholm operators in  \cite{FP1993}, the sign of the degree may change. 

Finally we complete the proof of Theorem \ref{Th1.1} by sending $\epsilon$ to $0$ in \eqref{Homotopy family} and subsequently in our approximations $\Omega_\epsilon^*$, using again our {\it a priori} solution bounds.

\begin{remark}\label{Remark 3.1}
To prove the existence theorem for classical solutions in the optimal transportation problem \cite{TW2009}, there are two different approaches using the method of continuity. The first approach, (in Section 5 in \cite{TW2009}), is based on  domain deformation, which requires an appropriate foliation which follows  from a global barrier condition, (see (1.21) or (5.7) in \cite{TW2009}). The second approach, (in Section 7 in \cite{TW2009}), is based on a direct construction of uniformly elliptic functions with  approximating target domains, which can be applied without domain deformation. Our proof here, in the more general geometric optics setting, utilizes the second approach in \cite{TW2009} without the domain variation and global barrier. Since we have also obtained the second derivative estimate in \cite{JT-Pogorelov} without the global barrier condition, we can completely avoid the global barrier condition for the existence result, Theorem \ref{Th1.1}. 
\end{remark}

\begin{remark}\label{Remark 3.2}
As remarked in Section \ref{Section 1}, there clearly exist an infinite number of solutions in Theorem \ref{Th1.1}. By inspection of our proof in Sections \ref{Section 2} and \ref{Section 3}, we may also assume that the interval $J_0$ in condition A5 is finite, provided there exists a $g$-affine function $g_0$ satisfying \eqref{2.3} and $Tg_0\in \Omega^*$.  Then we have a more precise result under the hypotheses of Theorem 
\ref{Th1.1}, namely there exists an elliptic solution $u\in C^3(\bar \Omega)$ of the second boundary value problem \eqref{GPJE}, \eqref{Second boundary data} whose graph intersects that of $g_0$.
\end{remark}

\begin{remark}\label{Remark 3.3}
The supplementary conditions, A4w or A4*w or A3, in Theorem \ref{Th1.1} are only used to guarantee global second derivative bounds so it would be interesting to prove such bounds just under conditions A1, A2, A1* and A3w. Such bounds were originally proved in \cite{TW2009} for general Monge-Amp\`ere type equations also under the addition of the global barrier condition; (see equation (3.3) in \cite{TW2009}), which was subsequently removed in \cite{JT-Pogorelov} for optimal transportation equations and, more generally, generated prescribed Jacobian equations, satisfying A4w or A4*w. We remark also that the alternative  duality method proposed in \cite{TW2009}, Theorem 3.2, for the case when $A$ depends only on $p$, is not valid, although the case when $n=2$ still follows directly without using our construction in \cite{JT-Pogorelov}. Similarly the foreshadowed alternative use of duality at the end of \cite{JT-Pogorelov} is only valid for $n=2$. We are grateful to Philippe Delan\"oe for pointing out this problem to us.

\end{remark}

\begin{remark}\label{Remark 3.4}
We  may also consider the second boundary value problem \eqref{Second boundary data} for more general fully nonlinear, augmented Hessian equations of the form,
\begin{equation}\label{augmented Hessian}
F[D^2u- A(\cdot,u,Du)] = B(\cdot,u,Du),
\end{equation}
where $F$ is an increasing function on the positive cone of $n\times n$ symmetric matrices, $A$ given by \eqref{A,B}
is defined through a $C^4$ generating function $g$ and $B$ is a positive function in 
$C^2(\bar\Omega\times \mathbb{R}\times \mathbb{R}^n)$.
The main examples here are the Hessian quotients, $F_{n,k}$ given by 
\begin{equation}\label{Hessian quotient}
F_{n,k} = \frac{\det }{S_k},
\end{equation}
where the $k$-Hessian $S_k(r)$, $0<k<n$, is the sum of the principal $k\times k$ minors of the matrix $r$, which were considered in the optimal transportation case, when $g$ is given by a cost function, by von Nessi \cite {vonNessi2010}. Global second derivative estimates for elliptic solutions, in the case \eqref{Hessian quotient} and more generally were proved by us in  \cite{JT-Oblique II}, Corollary 3.1, under conditions A1, A2, A3w and A4w together with $B$ independent of $p$ satisfying  the monotonicity $B_u\ge 0$ and the existence of an elliptic subsolution $\underline u\in C^2(\bar\Omega)$. For such general operators, we can also assume condition A5 to guarantee the gradient estimate when the range of the solution $u$ lies in $J_0$. If we have appropriate solution estimates, we can readily prove the classical existence result following the steps in Sections \ref{Section 2} and \ref{Section 3} of the current paper. In general, the obstacle for the maximum solution estimate arises from the lack of the structures \eqref{psi} and \eqref{conservation of energy}. However when $B(x,\cdot)(J_0) = (0, \infty)$ for all $x \in \Omega$, we  can obtain the solution estimate by modification of Section 5.2 in \cite{vonNessi2010}, and thus  conclude the classical existence result. 
\end{remark}

%%%%%%%%%%%%%%%%%%%%%%%%%%%%%%%%%%%%%%%%%%%%%%%%%%%%%%%%%%%%%%%%%%%%%%%%%%%

\section{Applications in geometric optics}\label{Section 4}

In this section we treat the application of Theorem \ref{Th1.1} to some  examples in geometric  optics developed  in \cite{JT-Pogorelov} in conjunction with  our global second derivative bounds. In particular we consider  the reflection and refraction of parallel light beams to targets, which are graphs over orthogonal hyperplanes. Our concern, as in the case of flat targets in \cite{LT2016} is with globally smooth solutions and the reader is referred to \cite{T2014, T2014-1, GK2015} for local regularity considerations as well as \cite{GuT2015, K-2014, K-2016} for more general targets. As in \cite{JT-Pogorelov}, we consider parallel beams in $\mathbb{R}^{n+1}$, directed in the direction of $e_{n+1}$, through a domain $\Omega\subset \mathbb{R}^n\times \{0\}$, illuminating  targets  which are graphs over domains $\Omega^*\subset \mathbb{R}^{n}\times \{0\}$. The targets are allowed to be either flat or non-flat.

\subsection{Reflection}\label{Section 4.1} 

Let $\mathcal D$ be a domain in $ \mathbb{R}^n\times\mathbb{R}^n$, containing $\bar\Omega\times\bar\Omega^*$, and consider the generating function:
\begin{equation}\label{reflection}
g(x,y,z) = \Phi(y) - \frac{z}{2} + \frac{1}{2z}|x-y|^2,
\end{equation}
defined for $(x,y) \in\mathcal D$ and $z<0$ where $\Phi$ is a smooth function on $\mathbb{R}^n$.
Here we have replaced $z$ by $-1/z$ in \cite{JT-Pogorelov} to conform with the subsequent refraction examples. From our calculations in Section 4.2(i) of \cite{JT-Pogorelov}, we see that $g$ satisfies conditions A1, A2, A1*, A4w  on $\Gamma \subset \mathbb{R}^n\times \mathbb{R}^n \times \mathbb{R}$, which can be defined through its dual set
$$ \Gamma^* =\{(x,y,u)\in \mathbb{R}^n\times \mathbb{R}^n \times \mathbb{R} | \ (x,y)\in \mathcal D, u\in J(x,y)\},$$
\noindent where
\begin{equation} \label{J}
J(x,y) = (\Phi(y) + (x-y)\cdot D\Phi(y), \infty).
\end{equation}

In the corresponding reflection problem we seek a reflecting surface $\mathcal R$ as a  graph $\{(x,u(x)) | \ x\in \Omega\}$, so that light with intensity $f$ on $\Omega$ is mapped under the reflection mapping $Tu$ to intensity $f^*$ on $\Omega^*$, where $f$ and $f^*$ satisfy the conservation of energy condition \eqref{conservation of energy}. This leads to solving the boundary value problem, \eqref{GPJE}, \eqref{Second boundary data} and the condition $u(x) \in J(x,y)$ for all $x\in\Omega$, $y\in \Omega^*$, arising from \eqref{J}, is equivalent to the reflector $\mathcal R$ lying above the tangent hyperplane to the target $ \mathcal T$ at any $y\in \Omega^*$, which is clearly necessary  when a ray through $x\in\Omega$  illuminates 
$ \mathcal T$ from above at $y\in\Omega^*$. Next from (4.11) in \cite{JT-Pogorelov}, the matrix $A$ is given by
\begin{equation}
A(x,u,p) = \frac{1}{Z(x,u,p)} I
\end{equation}
so that A3w is satisfied if and only if the function $1/Z$ is locally convex in $p$, where $Z$ denotes the dual function. 
Finally we  have, again from the calculations in \cite{JT-Pogorelov}, that condition A5 is satisfied for $J_0=(m_0,\infty)$, where $m_0$ is given by
$$ m_0 = \sup_{ x\in\Omega, y\in\Omega^*} [\Phi(y) + (x-y)\cdot D\Phi(y)],$$ 
and $K_0$ given by
$$ K_0 = \sup_{\Omega^*} (\sqrt{1 + |D\Phi|^2} +|D\Phi|).$$

\subsection{Refraction}\label{Section 4.2} 

We consider refraction from media $I$ to media $I\!I$, through a surface interface  $\mathcal{R} = \{(x,u(x)) | \ x\in \Omega\}$, with respective refraction indices $n_1,n_2 > 0$ and set $\kappa = n_1/n_2$. For $\kappa\ne 1$, we consider now generating functions,
\begin{equation}\label{refraction}
 g(x,y,z) = \Phi(y) - \frac{1}{|\kappa^2 - 1|} \big(\kappa z +\sqrt{z^2 +(\kappa^2-1)|x-y|^2}\big),
\end{equation}
where again $(x,y)\in\mathcal D$, $z > \kappa^\prime|x-y|$ for $0 <\kappa < 1$,
 $> 0$ for $\kappa > 1$, where $\kappa^\prime =\sqrt{|\kappa^2-1|}$, and $\Phi$ is a smooth function on $\mathbb{R}^n$. From our calculations in Section 4.2(ii) of \cite{JT-Pogorelov}, we then obtain that $g$ satisfies conditions A1, A2 and A1* as above, with in place of \eqref{J},
 \begin{equation} 
J(x,y) = (-\infty, \Phi(y) + (x-y)\cdot D\Phi(y))\cap(-\infty, \Phi(y)- \frac{\min\{\kappa,1\}}{\kappa^\prime} |x-y|),
\end{equation}
with condition A4w satisfied for $\kappa <1$ and condition A4*w for $\kappa>1$. Furthermore from (4.19) and (4.22) in
\cite{JT-Pogorelov}, the matrices $A$ are given by
\begin{equation}
A(x,u,p) = [{\rm{sign}}(1-\kappa^2)]\frac{\sqrt{1+(1-\kappa^2)|p|^2}}{ Z(x,u,p)}[I + (1-\kappa^2)p\otimes p],
\end{equation}
so that condition A3w is satisfied if and only if
the function
$$ p \rightarrow \frac{(1-\kappa^2)\sqrt{1+(1-\kappa^2)|p|^2}}{Z(x,u,p)} $$
is locally convex. Finally to complete our hypotheses for the application of Theorem \ref{Th1.1}, we obtain, again 
from the calculations in \cite{JT-Pogorelov}, that condition A5 is satisfied for $J_0=(-\infty,M_0)$, where $M_0$ is given by
$$ M_0 = \inf_{ x\in\Omega, y\in\Omega^*} \min\{\Phi(y) + (x-y)\cdot D\Phi(y), \Phi(y) -\frac{\min\{\kappa,1\}}{\kappa^\prime}(1+\delta)|x-y|\},$$ 
where $\delta >0$ for $\kappa < 1$, $\delta = 0$ if $\kappa > 1$, and $K_0 = 2/ \kappa\kappa^\prime\delta$ for $\kappa<1$, $K_0 = 1/ \kappa^\prime$ for $\kappa >1$. Note that in our refraction model, the constraint $u(x) \in J(x,y)$ for all $x\in\Omega$, $y\in \Omega^*$ implies the reflector $\mathcal R$ lies below the tangent hyperplane to the target 
$ \mathcal T$ at any $y\in \Omega^*$, which is clearly necessary  when a ray through $x\in\Omega$  illuminates 
$ \mathcal T$ from below at $y\in\Omega^*$. 

 \subsection{Flat targets}\label{Section 4.3} 
 
 When the target is  flat, that is $\Phi =$ constant, our models reduce to those considered in
 \cite{LT2016} and condition A3 holds, as is seen readily from the formulae for the dual function $Z$, namely
 $$ Z(x,u,p) = \frac{2(\Phi-u)}{1-|p|^2}, \ u >\Phi, \ |p| < 1$$
 in the case of reflection, and
 $$Z(x,u,p) = \frac{|1-\kappa^2|(\Phi - u)\sqrt{1+ (1-\kappa^2)|p|^2}}{1+\kappa\sqrt{1+ (1-\kappa^2)|p|^2}}, \  u< \Phi, \ (\kappa^2-1)|p|^2 < 1$$
 in the case of refraction. Now applying Theorem \ref{Th1.1}, it follows that the barrier condition (28) in the hypotheses of Theorems 5.1 and 5.2 in \cite{LT2016} can be removed and moreover Remark \ref{Remark 3.2} provides more information about the overall solution set, and is also applicable to Theorem 1.2 in  \cite{LT2016}.

 \subsection{Reverse ellipticity}\label{Section 4.4} 
 From Theorem \ref{Th1.1}, we also obtain classical solutions to the above reflector problems satisfying the reverse ellipticity condition, $D^2u < A (\cdot,u,Du)$. Here again conditions A1, A2, A1*, A4w or A4*w, A5 are satisfied as in Sections \ref{Section 4.1} and \ref{Section 4.2} but we must replace $Z$ by $-Z$ or equivalently convexity by concavity to ensure condition A3w. In these models we have chosen at the outset the ellipticity, or equivalently the  support from below by focusing quadric surfaces corresponding to our $g$-affine functions, in order to embrace the flat target cases in Section \ref{Section 4.3}.

 \vskip 12pt

\textbf{Acknowledgements.} Research partially supported by the National Natural Science Foundation of China (No.11401306) and the Australian Research Council (DP170100929).

\textbf{Conflict of interest.} There are no conflicts of interest.

\baselineskip=12pt
\parskip=0pt

\end{document}